\newtheorem{theorem}{Theorem}[section]
\newtheorem{lemma}[theorem]{Lemma}
\newtheorem{prop}[theorem]{Proposition}
\theoremstyle{remark}
\newtheorem{rem}[theorem]{Remark}
\theoremstyle{definition}
\newtheorem{definition}[theorem]{Definition}
\newtheorem{example}[theorem]{Example}
\DeclareMathOperator{\Dom}{Dom}
\DeclareMathOperator{\Bor}{Bor}
\DeclareMathOperator{\id}{id}
\DeclareMathOperator{\supp}{supp}
\DeclareMathOperator{\re}{Re}
\DeclareMathOperator*{\wlim}{w-lim}
\newcommand{\dt} {\partial_t}
\newcommand{\X} {{\mathbb{R}^d}}
\newcommand{\R} {\mathbb{R}}
\newcommand{\C} {\mathbb{C}}
\newcommand{\N} {\mathbb{N}}
\newcommand{\T} {[0,\infty)}
\newcommand{\Tb} {[0,T]}
\newcommand{\DT} {C_c^\infty(\T\times\X)}
\newcommand{\CTL} {C_c(\T,\F{\sigma_0})}
\newcommand{\intT}[1] {\int_0^\infty #1\,dt}
\newcommand{\iprod}[2] {#1\cdot#2}
\newcommand{\FT}[1] {\mathcal{F}\big\{ #1 \big\}}
\newcommand{\B} {(\dt + \mathcal{L}^*)}
\renewcommand{\phi} {\varphi}
\newcommand{\LL} {\mathcal{L}}
\renewcommand{\P} {\mathcal{P}}
\newcommand{\Q} {\mathcal{Q}}
\newcommand{\A} {\mathcal{A}}
\newcommand{\eL}[1] {e^{#1\LL^*}}
\newcommand{\F}[1] {H(#1)}
\newcommand{\phiT} {\nu_{T^\beta}\phi}
\renewcommand{\=} {\overset{d}{=}}
\newcommand{\rfb}[1] {{\upshape(\ref{#1})}}
\begin{document}
\title[]{Scaling Limits of Solutions of
Linear Evolution Equations
with Random Initial Conditions}
\author{Miłosz Krupski}
\address{Uniwersytet Wrocławski, Instytut Matematyczny\\ pl. Grunwaldzki 2/4, 50-384 Wrocław}
\email{milosz.krupski@math.uni.wroc.pl}
  \begin{abstract}
    We consider a linear equation $\dt u = \LL u $, where $\LL$ is a generator of a semigroup of linear operators on a certain
    Hilbert space related to an initial condition $u(0)$ being a generalised stationary random field on $\X$.
    We show the existence and uniqueness of generalised solutions to such initial value problems.
    Then we investigate their scaling limits.
  \end{abstract}
  \maketitle
  \begin{section}{Introduction}
    Partial differential equations with random initial data appear in a wide variety of topics, e.g in the study
    of the Large Scale Structure of the Universe or growing interfaces in deposition of chemical substances
    \cite{MR1305783}, \cite{MannJr.2001}, \cite{MR1732301}.

    In particular, such equations have been studied
    in a number of papers which were primarily concerned with calculating scaling limits of solutions
    \cite{MR1632518}, \cite{MR1939652}, \cite{MR1828776}, \cite{MR1859007}.
    However, they all lack a formal definition of solutions and instead depend only on classical
    formulas for non-random equations, extended to accommodate random fields. Furthermore, the absence of a proper definition
    prevented the authors from exploring conditions for the existence and uniqueness of solutions.

    Moreover, in the context of scaling limits of solutions, it is a welcome result to show that the obtained limit (provided it exists) is also a solution
    to some problem related to the original one, perhaps only in a less regular sense.
    Looking at previous results, e.g.~\cite{MR1632518}, \cite{MR1939652}, one may indeed see this being the matter; however in most cases,
    the concept of a generalised random field is necessary to properly define initial conditions of those derived problems.

    In this paper we put forward a comprehensive approach to study general linear evolution equations supplemented with random initial conditions,
    using tools provided by the theory of generalised random fields. We consider the following abstract initial value problem
    \begin{equation*}
      \left\{
      \begin{aligned}
        &\dt u = \LL u\quad&\text{on $\T\times\X$}, \\
        &u(0) = \eta_0\quad&\text{on $\X$},
      \end{aligned}
      \right.
    \end{equation*}
    where $\eta_0$ is a broad-sense stationary generalised random field and $\LL$ is a linear operator generating a $C_0$-semigroup of linear operators
    on a certain Hilbert space corresponding to $\eta_0$.

    In Section~2 we prepare tools necessary to develop our theory. Section~3 contains main results, including
    the definition of solutions and a proof that, under reasonable assumptions, a kind of a ``semigroup solution'' is in fact unique. Section~4
    is concerned with calculating scaling limits of solutions in an abstract setting. Finally, Section~5 contains examples
    illustrating the results.

    The scope of this work is limited to linear problems; however, the ideas it contains seem to be applicable to
    non-linear problems as well. Such considerations will be the subject of our forthcoming papers.
  \end{section}
  \begin{section}{Preliminaries}
    \begin{subsection}{Basic notation}
      We denote the Borel sigma-algebra on $\X$ by $\Bor(\X)$ and the Lebesgue measure by either $\lambda$ or $dx$.

      We use the Fourier transform defined as
      \begin{equation*}
        \FT{f}(\xi) = \int_\X e^{i\iprod{\xi}{x}}f(x)\,dx.
      \end{equation*}

      Given a measure space $(X,\Theta,\mu)$, by $L^2(X,\Theta,\mu)$ we denote the space of all $\Theta$-measurable real functions such that the integral
      \begin{equation*}
        \int_X |f|^2 d\mu = \int_X |f(x)|^2 \mu(dx)
      \end{equation*}
      is finite. Usually we will shorten the notation to $L^2(\mu) = L^2(X,\Theta,\mu)$.

      Let us fix a probability space $(\Omega,\Sigma,P)$ and denote $L^2(\Omega) = L^2(\Omega,\Sigma,P)$.
      This is the set of all random variables with bounded variations.
      It is a Hilbert space with the standard inner product $EXY$ defined for all $X,Y\in L^2(\Omega)$.

      We also use the notation $L^2(X,\Theta,\mu;\C)$, $L^2(\Omega;\C)$ to describe analogous spaces of complex-valued functions.

      We write
      \begin{equation*}
        (X_1,\ldots,X_k) \= (Y_1,\ldots,Y_k),\quad\text{where $X_i,Y_i\in L^2(\Omega)$ and $k\in\N$}
      \end{equation*}
      if both random vectors have the same probability distributions and we say that
      \begin{equation*}
        \wlim_{n\to\infty}(X_{1,n},\ldots,X_{k,n}) \= (Y_1,\ldots,Y_k)
      \end{equation*}
      if the corresponding probability distributions converge weakly as measures on the product space.

      For a linear operator $\A$, by $\Dom(\A)$ we denote its domain.
      Let us also recall a definition of an admissible subspace following \cite[Ch.~4, Def.~5.3]{MR710486}.
      \begin{definition}\label{admissibility}
        Let $Y$ be a linear subspace of a Banach space $(X,\|\cdot\|_X)$ such that there exists a norm $\|\cdot\|_Y$ in which $Y$ is a Banach space itself, and
        $\|y\|_X \leq C\|y\|_Y$ for all $y\in Y$.
        Suppose $\A$ is the generator of a $C_0$-semigroup $T(t)$ on $X$.
        A subspace $Y$ of $X$ is called $\A$-admissible if it is an invariant subspace of $T(t)$, $t \geq 0$,
        and the restriction of $T(t)$ to $Y$ is a $C_0$-semigroup on $Y$ (i.e.~it is strongly continuous in the norm $\|\cdot\|_Y$).
      \end{definition}
    \end{subsection}
    \begin{subsection}{Generalised random fields}
      Now we recall the basics of the theory of generalised random fields. For a more detailed description we refer
      the reader to \cite{MR0173945}, \cite{MR1629699} and \cite{MR0097842}.

      First we introduce the space of test functions $D=C_c^\infty(\X)$ consisting of smooth functions with compact supports.
      \begin{definition}
        A sequence $\phi_n$ is convergent to $\phi$ in $D$ if
        \begin{enumerate}
          \item there exists a compact $K\subset\X$ such that $\supp\phi\cup\bigcup_{n=0}^\infty\supp \phi_n \subset K$
          \item derivatives $\partial^\alpha\phi_n$ converge uniformly to $\partial^\alpha\phi$ for every multi-index $\alpha$.
        \end{enumerate}
      \end{definition}
      \begin{definition}
        A generalised random field is a continuous linear operator $\eta:D\to L^2(\Omega)$.
      \end{definition}

      With a slight abuse we use the following notation
      \begin{equation*}
        \eta(\phi_1,\ldots,\phi_k) = (\eta(\phi_1),\ldots,\eta(\phi_k)).
      \end{equation*}
      For two generalised random fields we write $\eta \= \xi$
      and say that $\eta$ and $\xi$ are equivalent in finite-dimensional distributions if for every $k\in\N$ and every vector of test functions $(\phi_1,\ldots\phi_k) \in D^k$ we have
      \begin{equation*}
        \eta(\phi_1,\ldots,\phi_k) \= \xi(\phi_1,\ldots,\phi_k).
      \end{equation*}
      Similarly we say $\eta_n$ converge \emph{weakly in finite-dimensional distributions} to $\eta$ if for every vector of test functions $(\phi_1,\ldots\phi_k) \in D^k$
      \begin{equation*}
        \wlim_{n\to\infty}\eta_n(\phi_1,\ldots,\phi_k) \= \eta(\phi_1,\ldots,\phi_k),
      \end{equation*}
      and write $\wlim_{n\to\infty}\eta_n\=\eta$.
    \end{subsection}
    \begin{subsection}{Random measures and integrals}
      \begin{definition}
        Let $\sigma$ be a Borel measure on $\X$ and $\Pi\subset\Bor(\X)$ be a family of all sets $A$ such that $\sigma(A)$ is finite.
        An orthogonal random measure $Z$ with a reference measure $\sigma$ is a function $Z:\Pi\to L^2(\Omega;\C)$ such that
        \begin{enumerate}
          \item $EZ(A)=0$,
          \item $EZ(A_1)\overline{Z(A_2)} = \sigma(A_1\cap A_2)$,
          \item $Z(\bigcup_{n=1}^\infty A_n) = \sum_{n=1}^\infty Z(A_n)$ {\rm(}as a limit in $L^2(\Omega;\C)${\rm)}
          for every pairwise\ \smallskip disjoint collection of sets $\{A_n\}\subset\Pi$ such that $\bigcup A_n\in\Pi$.
        \end{enumerate}
      \end{definition}
      Notice that, despite the name, in general $Z$ is \emph{not} a measure since $\Pi$ is not necessarily a sigma-algebra.
      It may even be the case that $Z$ cannot be extended to a proper measure. It is however possible to define an integral with respect to $Z$ (see
      e.g.~\cite[Ch.~2,~Sec.~3]{MR1885884} or \cite[Ch.~5,~Sec.~3]{MR0247660} for more details).
      Here, let us only briefly recall that the properties of an orthogonal random measure allow us to define a unique linear operator $I_Z: L^2(\X,\Bor(\X),\sigma;\C)\to L^2(\Omega;\C)$,
      such that
      \begin{equation}\label{simple-functions}
        I_Z\Big(\sum_{i\leq n} c_i \mathbbm{1}_{A_i}\Big) = \sum_{i\leq n} c_i Z(A_i)\ \text{a.e. for all $n\in \N$, $A_i\in \Pi$ and $c_i\in\C$,}
      \end{equation}
      and
      \begin{equation*}
        E|I_Z(f)|^2 = \int_\X |f|^2\,d\sigma\quad\text{for all $f\in L^2(\X,\Bor(\X),\sigma;\C)$.}
      \end{equation*}
      \begin{rem}\label{pisystem}
        In order to define an othogonal random measure it suffices to consider a family of sets $\Pi_0$ such that $A_1,A_2\in\Pi_0$
        implies $A_1\cap A_2\in\Pi_0$ (a $\pi$-system) and there exist $A_1\subset A_2\subset \ldots\in \Pi_0$
        such that $\bigcup_{n=1}^\infty A_n = \X$ (see~\cite[Ch.~2,~Sec.~3,~Thm.~19]{MR1885884}).
      \end{rem}

      We say that $Z_n$ converges \emph{weakly in finite-dimensional distributions} to $Z$
      if for every vector of functions $(f_1,\ldots,f_k) \in L^2(\X,\Bor(\X),\sigma;\C)^k$
      we have
      \begin{equation*}
        \wlim_{n\to\infty}(I_{Z_n}(f_1),\ldots,I_{Z_n}(f_k)) \= (I_Z(f_1),\ldots, I_Z(f_k)),
      \end{equation*}
      denoting $\wlim_{n\to\infty}Z_n \= Z$.
      \begin{prop}\label{ccw-measures}
        Suppose we have
        \begin{equation*}
          \lim_{n\to\infty}E|Z_n(A)-Z(A)|^2 = 0\quad\text{for every $A\in\Pi$}.
        \end{equation*}
        Then $\wlim_{n\to\infty}Z_n\=Z$.
      \end{prop}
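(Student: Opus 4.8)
The plan is to reduce the statement to the elementary fact that convergence in $L^2$ implies convergence in distribution. Concretely, I would first show that the hypothesis, which only controls $Z_n-Z$ on sets of finite reference measure, can be upgraded to the assertion that $I_{Z_n}(f)\to I_Z(f)$ in $L^2(\Omega;\C)$ for \emph{every} $f\in L^2(\X,\Bor(\X),\sigma;\C)$. Once this is in place, the claimed weak convergence in finite-dimensional distributions follows immediately for each fixed vector of functions.

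First I would handle simple functions. By the defining property \rfb{simple-functions} one has $I_{Z_n}(\mathbbm{1}_A)=Z_n(A)$ and $I_Z(\mathbbm{1}_A)=Z(A)$, so the hypothesis reads exactly $E|I_{Z_n}(\mathbbm{1}_A)-I_Z(\mathbbm{1}_A)|^2\to 0$ for every $A\in\Pi$. Linearity of both integrals together with the triangle inequality in $L^2(\Omega;\C)$ then yields $\|I_{Z_n}(s)-I_Z(s)\|_{L^2(\Omega;\C)}\to 0$ for every simple function $s=\sum_{i\leq m}c_i\mathbbm{1}_{A_i}$ with $A_i\in\Pi$ and $c_i\in\C$. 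To pass to a general $f$, I would fix $\varepsilon>0$, choose a simple $s$ with $\|f-s\|_{L^2(\sigma)}<\varepsilon$, and invoke the isometry $E|I_{Z_n}(g)|^2=\int_\X|g|^2\,d\sigma$ (valid for both $Z_n$ and $Z$). A three-term split
\begin{equation*}
  \|I_{Z_n}(f)-I_Z(f)\|_{L^2(\Omega;\C)}\leq \|I_{Z_n}(f-s)\|_{L^2(\Omega;\C)}+\|I_{Z_n}(s)-I_Z(s)\|_{L^2(\Omega;\C)}+\|I_Z(f-s)\|_{L^2(\Omega;\C)}
\end{equation*}
bounds the two outer terms by $\varepsilon$, uniformly in $n$, while the middle term tends to $0$ by the simple-function case; since $\varepsilon$ is arbitrary this gives $I_{Z_n}(f)\to I_Z(f)$ in $L^2(\Omega;\C)$.

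Finally, for a fixed vector $(f_1,\ldots,f_k)$, coordinatewise $L^2$-convergence upgrades to convergence of the random vectors in $L^2(\Omega;\C^k)$, hence in probability, hence in distribution, which is precisely $\wlim_{n\to\infty}(I_{Z_n}(f_1),\ldots,I_{Z_n}(f_k))\=(I_Z(f_1),\ldots,I_Z(f_k))$. The step I expect to be the crux is the first upgrade: the hypothesis supplies control only set-by-set, so the real work is done by the isometry, which transfers $L^2$-closeness from the dense subspace of simple functions uniformly to all of $L^2(\X,\Bor(\X),\sigma;\C)$; the distributional conclusion afterwards is soft.
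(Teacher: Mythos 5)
Your proposal is correct, and its core is the same reduction the paper makes: upgrade the set-by-set hypothesis to $L^2(\Omega;\C)$-convergence of $I_{Z_n}(f)$ for every $f\in L^2(\X,\Bor(\X),\sigma;\C)$, and then pass to distributional convergence. The differences lie in how the two halves are carried out. For the first half, the paper compresses your $\varepsilon/3$ argument into the single phrase ``because of \rfb{simple-functions} and the continuity of the operators $I_{Z_n}$ and $I_Z$''; your version makes explicit the point that actually does the work, namely that the isometry $E|I_{Z_n}(g)|^2=\int_\X|g|^2\,d\sigma$ bounds $\|I_{Z_n}(f-s)\|_{L^2(\Omega;\C)}$ \emph{uniformly in $n$} --- continuity of each operator separately would not let you pass from the dense set of simple functions to all of $L^2(\sigma)$, so identifying equicontinuity (via the isometry) as the crux is exactly right. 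For the second half you genuinely diverge: the paper applies the Chebyshev inequality to get convergence in probability of each $I_{Z_n}(f)$ and then invokes linearity together with the Cram\'er--Wold theorem to obtain joint weak convergence of the vectors, whereas you observe that coordinatewise $L^2$-convergence already gives convergence of the random vector in $L^2(\Omega;\C^k)$, hence in probability, hence jointly in distribution. Your route is more elementary: it needs no external theorem, and it sidesteps the small irritation that Cram\'er--Wold is usually stated for real random vectors, so the paper's argument implicitly requires separating real and imaginary parts (or a complex variant of the theorem). Both arguments are sound; yours is arguably the cleaner way to finish.
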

      \begin{proof}
        Because of~\rfb{simple-functions} and the continuity of the operators $I_{Z_n}$ and $I_Z$, we have
        \begin{equation*}
          \lim_{n\to\infty} E|I_{Z_n}(f)-I_{Z}(f)|^2 = 0\quad\text{for every $f\in L^2(\X,\Bor(\X),\sigma;\C)$}.
        \end{equation*}
        Denote
        \begin{equation*}
          k_n = E|I_{Z_n}(f)-I_{Z}(f)|^2.
        \end{equation*}
        Then by the Chebyshev inequality we have
        \begin{equation*}
          P(|I_{Z_n}(f)-I_{Z}(f)|>\varepsilon) \leq \frac{k_n}{\varepsilon^2},
        \end{equation*}
        which shows that for every $f\in L^2(\X,\Bor(\X),\sigma;\C)$, $I_{Z_n}(f)$ converges in probability to $I_Z(f)$ and therefore also
        \begin{equation*}
          \wlim_{n\to\infty}I_{Z_n}(f)\=I_Z(f).
        \end{equation*}
        Then by linearity of the operators $I_{Z_n}$ and $I_Z$ and the Cramér--Wold theorem (see~\cite[Thm.~29.4]{MR1324786}) we obtain
          $\wlim_{n\to\infty}Z_n\=Z$.
      \end{proof}
      From now on we shall write
      \begin{equation*}
        I_Z(f) = \int_\X f(x)\,Z(dx).
      \end{equation*}
    \end{subsection}
    \begin{subsection}{Broad-sense stationary random fields and $H$ space}
      For $\phi\in D$ and a parameter $h\in \X$ we denote the translation operator by
      \begin{equation*}
        \tau_h\phi(x) = \phi(x+h).
      \end{equation*}
      \begin{definition}
        A generalised random field $\eta$ is broad-sense stationary if for every $\phi,\psi\in D$
        \begin{align*}
          &E\eta(\phi) = E\eta(\psi),\\
          &E\eta(\phi)\eta(\psi) = E\eta(\tau_h\phi)\eta(\tau_h\psi)\quad\text{for every $h\in\X$}.
        \end{align*}
      \end{definition}
      In this work we limit ourselves to broad-sense stationary random fields with zero mean, i.e. $E\eta(\phi) = 0$ for every $\phi\in D$.

      We recall two basic theorems from the theory of broad-sense stationary random fields, the proofs of which may be found
      in~\cite{MR0173945} or~\cite{MR0247660}.
      A Borel measure $\sigma$ such that
      \begin{equation*}
        \int_\X\frac{1}{(1+|x|^2)^p}\,d\sigma
      \end{equation*}
      is finite for some $p>0$ is called tempered.

      \begin{theorem}
        Let $\eta$ be a broad-sense stationary random field. There exists a tempered measure $\sigma$, such that
        \begin{equation*}
          E\eta(\phi)\eta(\psi) = \int_\X\FT{\psi}\overline{\FT{\phi}}\,d\sigma.
        \end{equation*}
        The measure $\sigma$ is called the spectral measure of the random field $\eta$.
      \end{theorem}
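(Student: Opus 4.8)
The plan is to recognise the covariance functional as a continuous, positive-definite, translation-invariant bilinear form on $D$ and then apply the Bochner--Schwartz theorem. First I would set
\begin{equation*}
  B(\phi,\psi) = E\eta(\phi)\eta(\psi),\qquad \phi,\psi\in D.
\end{equation*}
Bilinearity is immediate, and since $\eta:D\to L^2(\Omega)$ is continuous, the Cauchy--Schwarz inequality in $L^2(\Omega)$ gives $|B(\phi,\psi)|\le\|\eta(\phi)\|_{L^2(\Omega)}\|\eta(\psi)\|_{L^2(\Omega)}$, so $B$ is separately continuous, hence continuous since $D$ is barrelled. Taking $\psi=\phi$ shows $B(\phi,\phi)=E|\eta(\phi)|^2\ge 0$, that is, $B$ is positive-definite, and the broad-sense stationarity assumption is exactly the statement that $B(\tau_h\phi,\tau_h\psi)=B(\phi,\psi)$ for every $h\in\X$.

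The decisive step is to encode this data in a single positive-definite distribution on $\X$. Writing $\psi^-(x)=\psi(-x)$, one checks that $(\phi*\psi^-)(x)=\int_\X(\tau_x\phi)(u)\psi(u)\,du$, so convolutions of test functions are assembled from translates of $\phi$ tested against $\psi$. I would then produce a distribution $\Gamma\in D'(\X)$ with
\begin{equation*}
  B(\phi,\psi)=\langle\Gamma,\ \phi*\psi^-\rangle,\qquad \phi,\psi\in D.
\end{equation*}
There are two natural routes. Since $D$ is nuclear, the Schwartz kernel theorem represents $B$ by a kernel $K\in D'(\X\times\X)$, and because $(\tau_h\phi)\otimes(\tau_h\psi)$ is the translate of $\phi\otimes\psi$ along the diagonal, the invariance of $B$ forces $K$ to depend only on the difference $x-y$, which yields $\Gamma$. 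Alternatively one defines $\Gamma$ directly on the linear span of the convolutions $\phi*\psi^-$ by the formula above and verifies it is well defined, i.e.~that $\sum_i\phi_i*\psi_i^-=0$ implies $\sum_iB(\phi_i,\psi_i)=0$; this is where translation invariance enters essentially, via an approximation of the convolution by Riemann sums of translates and a passage to the limit justified by the continuity of $B$. In either case the positive-definiteness of $B$ transfers to $\Gamma$ as $\langle\Gamma,\phi*\phi^-\rangle\ge 0$, and I would note that a positive-definite distribution is automatically tempered, so temperedness is not a separate concern.

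Finally I would invoke the Bochner--Schwartz theorem: a continuous positive-definite distribution is the Fourier transform of a unique positive tempered measure $\sigma$, so $\Gamma=\FT{\sigma}$. Unwinding the pairing via $\langle\FT{\sigma},\chi\rangle=\int_\X\FT{\chi}\,d\sigma$ and using $\FT{\phi*\psi^-}=\FT{\phi}\,\FT{\psi^-}$ together with $\FT{\psi^-}=\overline{\FT{\psi}}$ (valid for real $\psi$ under the convention fixed in the Preliminaries) gives
\begin{equation*}
  B(\phi,\psi)=\int_\X\FT{\phi}\,\overline{\FT{\psi}}\,d\sigma.
\end{equation*}
Since $B$ is symmetric and real-valued, this agrees with the asserted expression $\int_\X\FT{\psi}\,\overline{\FT{\phi}}\,d\sigma$, the two integrands being complex conjugates of one another, and the finiteness of $\int_\X(1+|x|^2)^{-p}\,d\sigma$ for some $p>0$ is exactly the temperedness delivered by Bochner--Schwartz.

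I expect the main obstacle to be the reduction in the second paragraph: proving that diagonal translation invariance forces the kernel to depend only on the difference variable, equivalently the well-definedness of $\Gamma$ on the span of convolutions $\phi*\psi^-$. Once $\Gamma$ is in hand as a positive-definite distribution, everything downstream — its temperedness and the passage to the spectral measure $\sigma$ — is a direct application of the Bochner--Schwartz theorem.
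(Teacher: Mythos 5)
This theorem is one the paper explicitly \emph{recalls without proof}: the text says the proofs ``may be found in \cite{MR0173945} or \cite{MR0247660}'' (Gelfand--Vilenkin and Gikhman--Skorokhod), so there is no in-paper argument to compare against. Your proposal is, in fact, essentially the classical proof given in those references: reduce the translation-invariant, positive-definite covariance form to a single positive-definite distribution $\Gamma$ with $B(\phi,\psi)=\langle\Gamma,\phi*\psi^-\rangle$, then apply Bochner--Schwartz to obtain the positive tempered measure $\sigma$ and unwind $\FT{\phi*\psi^-}=\FT{\phi}\,\overline{\FT{\psi}}$. Your outline is correct, and you rightly identify the crux: passing from diagonal translation invariance of the Schwartz kernel $K$ to a difference kernel. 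That step does close rigorously along the route you sketch --- density of tensor products $\phi\otimes\psi$ in $D(\X\times\X)$ upgrades invariance on tensors to $\tau_{(h,h)}K=K$, and after the change of variables $(u,v)=(x-y,y)$ the standard constancy lemma (a distribution invariant under all translations in $v$ is of the form $\Gamma\otimes dv$) produces $\Gamma$. Two small points deserve attention: (i) positive-definiteness of $\Gamma$ in the distributional sense requires $\langle\Gamma,\phi*\tilde\phi\rangle\ge0$ for \emph{complex} $\phi$, which follows from your real-valued inequality only after invoking the symmetry $B(\phi,\psi)=B(\psi,\phi)$ to kill the imaginary cross terms; (ii) the claim that separate continuity of $B$ yields joint continuity ``since $D$ is barrelled'' is shakier than you need --- for the kernel theorem on $D$, separate continuity already suffices, so you can drop that assertion. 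With these touch-ups your argument is a complete and standard proof of the stated spectral representation, including the temperedness of $\sigma$, which indeed comes for free from Bochner--Schwartz.
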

      \begin{theorem}\label{karhunen}
        Let $\eta$ be a broad-sense stationary random field with the spectral measure $\sigma$.
        There exists an orthogonal random measure $Z$ with the reference measure $\sigma$ such that for every $\phi\in D$
        \begin{equation}\label{spectral}
          \eta(\phi) = \int_\X\FT{\phi}\,dZ.
        \end{equation}
      \end{theorem}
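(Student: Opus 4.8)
The plan is to realise the correspondence $\FT{\phi}\mapsto\eta(\phi)$ as an isometry between Hilbert spaces and to read off $Z$ from it. First I would complexify $\eta$, extending it to a complex-linear map $\eta^\C$ on $C_c^\infty(\X;\C)$ by $\eta^\C(\phi_1+i\phi_2)=\eta(\phi_1)+i\eta(\phi_2)$. Since $\mathcal{F}$ is complex-linear and the covariance form is symmetric on real test functions, splitting into real and imaginary parts upgrades the spectral representation of the covariance (established in the preceding theorem) to
\begin{equation*}
  E\,\eta^\C(\phi)\overline{\eta^\C(\psi)} = \int_\X\FT{\phi}\,\overline{\FT{\psi}}\,d\sigma = \langle \FT{\phi},\FT{\psi}\rangle_{L^2(\X,\Bor(\X),\sigma;\C)}
\end{equation*}
for all $\phi,\psi\in C_c^\infty(\X;\C)$. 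Consequently the assignment $U_0\colon\FT{\phi}\mapsto\eta^\C(\phi)$ is a well-defined linear isometry (equal Fourier transforms force equal images, by the isometry identity applied to the difference) from the linear span $V=\mathcal{F}\big(C_c^\infty(\X;\C)\big)$ onto the span of the $\eta^\C(\phi)$ in $L^2(\Omega;\C)$, and I would extend it by continuity to an isometry $U$ on the closure $\overline{V}$.

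The crucial step is to prove that $V$ is dense in $L^2(\X,\Bor(\X),\sigma;\C)$, so that $U$ is in fact defined on the whole space. For this I would argue that $C_c^\infty(\X;\C)$ is dense in the Schwartz space $\mathcal{S}$ and that $\mathcal{F}$ is a homeomorphism of $\mathcal{S}$, whence $V=\mathcal{F}\big(C_c^\infty(\X;\C)\big)$ is dense in $\mathcal{S}$ in the Schwartz topology; since $\sigma$ is tempered, convergence in $\mathcal{S}$ implies convergence in $L^2(\sigma;\C)$, because $\int_\X|f|^2\,d\sigma$ is bounded by $\sup_\xi(1+|\xi|^2)^{2p}|f(\xi)|^2$ times $\int_\X(1+|\xi|^2)^{-2p}\,d\sigma$, the latter being finite for $p$ large. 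Combined with the $L^2(\sigma;\C)$-density of $\mathcal{S}$ itself (valid since tempered measures are finite on compacts), this gives the density of $V$. I expect this density to be the main obstacle; the rest is Hilbert-space bookkeeping.

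Having extended $U$ to all of $L^2(\sigma;\C)$, I would define $Z(A)=U(\mathbbm{1}_A)$ for $A\in\Pi$ (legitimate since $\sigma(A)<\infty$) and verify the three axioms: $EZ(A)=0$ holds because $X\mapsto EX$ is continuous on $L^2(\Omega;\C)$ and vanishes on the span of the zero-mean variables $\eta^\C(\phi)$, hence on its closure; the identity $EZ(A_1)\overline{Z(A_2)}=\langle\mathbbm{1}_{A_1},\mathbbm{1}_{A_2}\rangle_{L^2(\sigma;\C)}=\sigma(A_1\cap A_2)$ is immediate from the isometry of $U$; and countable additivity in $L^2(\Omega;\C)$ follows from $\mathbbm{1}_{\bigcup_n A_n}=\sum_n\mathbbm{1}_{A_n}$ in $L^2(\sigma;\C)$ together with the continuity of $U$. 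Finally, the operator $I_Z$ characterised by $\rfb{simple-functions}$ and the map $U$ are two continuous linear maps agreeing on indicators, hence on simple functions, hence on all of $L^2(\sigma;\C)$ by density; therefore $\int_\X\FT{\phi}\,dZ=I_Z(\FT{\phi})=U(\FT{\phi})=\eta^\C(\phi)=\eta(\phi)$ for every real $\phi\in D$, which is exactly $\rfb{spectral}$.
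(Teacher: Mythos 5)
The paper never proves this theorem: it is explicitly recalled from the literature, with the reader referred to the monographs of Gelfand--Vilenkin and Gikhman--Skorokhod for the proofs. Your argument is correct, and it is in substance the classical proof found in those references (Karhunen's isometry method): complexify the field, use the covariance identity of the preceding theorem to see that $\FT{\phi}\mapsto\eta^\C(\phi)$ is a well-defined linear isometry from $V=\mathcal{F}\big(C_c^\infty(\X;\C)\big)\subset L^2(\X,\Bor(\X),\sigma;\C)$ into $L^2(\Omega;\C)$, extend it by density, set $Z(A)=U(\mathbbm{1}_A)$, and identify $U$ with $I_Z$ through agreement on simple functions. The details you give are sound: the complexified covariance identity does follow from the real one (the cross terms combine exactly as needed, using that the real covariance form is symmetric); temperedness of $\sigma$ is precisely what makes the embedding $\mathcal{S}\hookrightarrow L^2(\sigma;\C)$ continuous, so density of $\mathcal{F}\big(C_c^\infty\big)$ in $\mathcal{S}$ transfers to density in $L^2(\sigma;\C)$; and the three axioms of an orthogonal random measure follow as you say, the zero-mean hypothesis imposed throughout the paper being what yields $EZ(A)=0$. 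The one step you dispatch too quickly is the density of $\mathcal{S}$ itself in $L^2(\sigma;\C)$: finiteness of $\sigma$ on compact sets is not by itself the reason. One needs that a tempered (hence locally finite) Borel measure on $\X$ is Radon, so that by regularity an indicator $\mathbbm{1}_A$ with $\sigma(A)<\infty$ can be squeezed between a compact $K\subset A$ and an open $V\supset A$ with $\sigma(V\setminus K)$ small and then approximated in $L^2(\sigma)$ by an Urysohn function in $C_c(\X)$; mollification inside a fixed compact set, where $\sigma$ is finite, then upgrades $C_c$ to $C_c^\infty\subset\mathcal{S}$. This is a standard fact and does not affect correctness; with it supplied, your proof is complete and follows the same route as the sources the paper cites.
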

      Conversely, given an orthogonal random measure $Z$, by the means of formula~\rfb{spectral}, we may construct a generalised broad-sense stationary random field.

      Using the representation from Theorem~\ref{karhunen} we may also consider the following continuous extension of a broad-sense stationary random field $\eta$ to
      a space larger than~$D$.

      Given the spectral measure $\sigma$ associated with $\eta$, we consider the space $D$ as a pre-Hilbert space with the inner product
      \begin{equation}\label{scalar-product}
        (f,g) = \int_\X\FT{f}\overline{\FT{g}}\,d\sigma + \int_\X fg\,dx.
      \end{equation}
      We define the space $\F{\sigma}$ as the completion of $D$ with respect to \rfb{scalar-product}.
      Then $\F{\sigma}$ is a Hilbert space with the norm $\|f\|_{\F{\sigma}}^2 = (f,f)$.
      Furthermore, $D$ is a dense linear subspace of $\F{\sigma}$ and thus $\F{\sigma}$ is a dense linear subspace of $L^2(\X,\Bor(\X),dx)$.

      We may now define an extension $\eta: \F{\sigma} \to L^2(\Omega)$ by the formula
      \begin{equation*}
        \eta(f) = \int_\X\FT{f}\,dZ.
      \end{equation*}
      Since for every $f\in \F{\sigma}$ we have
      \begin{equation*}
        E(\eta(f))^2 = E\Big|\int_\X\FT{f}\,dZ\Big|^2 = \int_\X |\FT{f}|^2\,d\sigma \leq \|f\|_{\F{\sigma}}^2 < \infty,
      \end{equation*}
      this extension is well-defined and continuous.
      \begin{rem}\label{separability}
        $\F{\sigma}$ is a separable space.
      \end{rem}
    \end{subsection}
    \begin{subsection}{Transformations of stationary random fields}
      Consider a broad-sense stationary random field $\eta$ with spectral measure $\sigma$, extended to the space $\F{\sigma}$
      as described in the previous section.
      Let $\A : L^2(dx)\to L^2(dx)$ be a continuous linear operator. Its standard adjoint operator
      $\A^*$ is determined by the formula
      \begin{equation*}
          \int_\X\A fg\,dx = \int_\X f\A^* g\,dx\quad\text{for all $f,g\in L^2(dx)$}
      \end{equation*}
      \begin{definition}\label{tbssrf}
        Let $\A : L^2(dx)\to L^2(dx)$ and assume $\F{\sigma}$ is an invariant subspace of $\A^*$.
        A field $\A\eta: \F{\sigma} \to L^2(\Omega)$ defined as
        \begin{equation*}
          \A\eta(\phi) = \eta(\A^*\phi)\quad\text{for every $\phi\in\F{\sigma}$}
        \end{equation*}
        is called a transformed broad-sense stationary random field.
      \end{definition}
      \begin{rem}
       Let us emphasise that the operator $\A^*$, in general, \emph{is not} the adjoint of $\A$ on $\F{\sigma}$.
      \end{rem}
      \begin{definition}\label{locint}
        Suppose $\eta_t$ are broad-sense stationary random fields with spectral measures $\sigma_t$ and consider a Banach space
        $X$ such that $X\subset \F{\sigma_t}$ for every $t\geq 0$.
        A time-dependent family $\A_t\eta_t$ of transformed broad-sense stationary random fields is called locally integrable
        with respect to $X$ if for every $\psi\in C_c(\T,X)$
        \begin{enumerate}
          \item $t\mapsto\A_t\eta_t(\psi(t))$ is a measurable function,
          \item $\intT{E(\A_t\eta_t(\psi(t)))^2} <\infty$.
        \end{enumerate}
        An immediate consequence of this definition is that the mapping $t\mapsto\A_t\eta_t(\psi(t))$ is Bochner-integrable
        (see~\cite[Ch.~2,~Sec.~2,~Thm.~2]{MR0453964}).
      \end{definition}
    \end{subsection}
    \begin{subsection}{Regular stationary random fields}\label{regular}
      Having defined \emph{generalised} random fields, we should also mention their non-generalised, regular counterparts.
      We restrict ourselves to the one-dimensional, broad-sense stationary case for simplicity of the exposition.
      \begin{definition}
        A continuous mapping $\eta:\R\to L^2(\Omega)$ is called broad-sense stationary if for every $x,y\in\R$
        \begin{equation*}
          E\eta(x)\eta(y) \= R(|x-y|),
        \end{equation*}
        for some $R:\R\to\R$.
      \end{definition}
      It is easy to see that $R$ must be continuous and positive-definite, thus the Bochner theorem asserts that there exists a finite measure $\sigma$ on $\R$, such that
      $R(x) = \int_\R e^{ix\xi}\,\sigma(d\xi)$. In full analogy to Theorem~\ref{karhunen} we also have the representation $\eta(x) = \int_\R e^{ix\xi}\,Z(d\xi)$,
      where $Z$ is an orthogonal random measure with reference measure $\sigma$.

      Since
      \begin{equation*}
        \int_\R |\FT{\phi}|^2\,d\sigma < \sigma(\R)(\lambda(\supp\phi)\sup|\phi|)^2 < \infty,
      \end{equation*}
      we may define the corresponding generalised broad-sense stationary random field $\eta^\circ$ by using the formula
      \begin{equation*}
        \eta^\circ(\phi) = \int_\R \FT{\phi}\,dZ.
      \end{equation*}
      Conversely, suppose we have a generalised broad-sense stationary random field $\eta^\circ$, with its associated orthogonal random measure $Z$ and a finite spectral measure~$\sigma$.
      Then $E(\int_\R e^{ix\xi}\,Z(d\xi))^2 = \sigma(\R)$ and we may define a regular broad-sense stationary random field $\eta$ by the formula
      \begin{equation*}
        \eta(x) = \int_\R e^{ix\xi}\,Z(d\xi).
      \end{equation*}
    \end{subsection}
    \begin{example}\label{white-noise}
      Consider the standard (two-sided) complex-valued Brownian motion $B_t$ on $\R$ and define $W_0([0,t]) = B_t$.
      We may then extend $W_0$ to an orthogonal random measure $W$ on $\R$, defined on the family of bounded sets
      (i.e. of finite Lebesgue measure, cf.~Remark~\ref{pisystem}). Indeed, we may also check that
      \begin{equation*}
      EW([0,t])\overline{W([0,s])} = EB_t\overline{B_s} = \min\{s,t\} = \lambda([0,t]\cap [0,s]),
      \end{equation*}
      which proves that $\lambda$ is in fact the reference measure of $W$.

      Define a generalised stationary random field $\eta(\phi) = \int_\R\FT{\phi}W(dx)$. Then,
      \begin{equation*}
        E\eta(\phi)\eta(\psi) = \int_\R\FT{\phi}\overline{\FT{\psi}}\,dx = (2\pi)^d\int_\R\phi\psi\,dx
      \end{equation*}
      (the latter equality follows from the Parseval identity). The field $\eta$ is called the (Gaussian) white-noise.
      It follows immediately that $\F{dx} = L^2(dx)$.
      Since $\lambda$ is not a finite measure, one may show that this random field cannot be represented as a regular random field.
    \end{example}
  \end{section}
  \begin{section}{Solutions to Cauchy problem}
    The theory recalled in the previous section allows us to define solutions to the initial value problem
    \begin{equation}\label{heat}
      \left\{
      \begin{aligned}
        &\dt u = \LL u\quad&\text{on $\T\times\X$}, \\
        &u(0) \= \eta_0\quad&\text{on $\X$}
      \end{aligned}
      \right.
    \end{equation}
    and to study their properties. We assume the following
    \begin{description}
      \item[(\ref{heat}a)] 
        $\eta_0$ is a generalised broad-sense stationary random field;
      \item[(\ref{heat}b)] 
        $\LL$ is a generator of a $C_0$-semigroup $\{e^{t\LL}\}_{t\geq0}$ of linear operators on $L^2(dx)$ such that
        $D\subset\Dom(\LL^*)$ and $\F{\sigma_0}$ is $\LL^*$-admissible (see~Definition~\ref{admissibility}).
    \end{description}
    \begin{rem}
      The operator $\LL^*$ is also a generator of a $C_0$-semigroup on $L^2(dx)$ and $(e^{t\LL})^* = e^{t\LL^*}$ (see~\cite[Ch.~1,~Cor.~10.6]{MR710486}).
    \end{rem}

    \begin{definition}\label{solutions}
      A time-dependent family $u(t)$ of transformed broad-sense stationary random fields, which is locally integrable
      with respect to $\F{\sigma_0}$ (see Definition~\ref{locint})
      is called a solution to problem \rfb{heat} if for every vector
      $(\psi_1,\ldots,\psi_k)\in \DT^k$ we have
      \begin{equation}\label{solutions-equation}
        \intT{u(t)\big(\B\psi_1(t),\ldots,\B\psi_k(t)\big)}\=-\eta_0\big(\psi_1(0),\ldots,\psi_k(0)\big).
      \end{equation}
    \end{definition}

    \begin{rem} Notice that the expression under the integral in~\rfb{solutions-equation} is well-defined since for every $\psi(t)\in \DT$ a direct calculation shows
      \begin{equation*}
        \B\psi(t)\in \CTL.
      \end{equation*}
    \end{rem}

    In the following theorems we extend the meaning of classical semigroup solutions to cases where the initial conditions
    are broad-sense stationary random fields.

    \begin{theorem}[Existence of solutions]\label{existence}
      Assume {\rm(\ref{heat}a)} and {\rm(\ref{heat}b)}.
      The family of transformed broad-sense stationary random fields $e^{t\LL}\eta_0$ (see Definition~\ref{tbssrf}) is a solution to problem~\rfb{heat}.
    \end{theorem}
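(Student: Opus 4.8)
The plan is to verify the three requirements of Definition~\ref{solutions} in turn: that $\eL{t}\eta_0$ is a well-defined family of transformed broad-sense stationary random fields, that it is locally integrable with respect to $\F{\sigma_0}$, and that it satisfies identity~\rfb{solutions-equation}. The first point is immediate from the hypotheses. Since $(e^{t\LL})^* = \eL{t}$ and $\F{\sigma_0}$ is $\LL^*$-admissible, $\F{\sigma_0}$ is an invariant subspace of $\eL{t}$, so Definition~\ref{tbssrf} applies and $\eL{t}\eta_0(\phi) = \eta_0(\eL{t}\phi)$ for every $\phi\in\F{\sigma_0}$ and every $t\geq 0$.

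For local integrability I would use admissibility once more: the restriction of $\eL{t}$ to $\F{\sigma_0}$ is a $C_0$-semigroup, hence uniformly bounded on compact time intervals. For $\psi\in\CTL$ the curve $t\mapsto\eL{t}\psi(t)$ is then continuous into $\F{\sigma_0}$ and vanishes outside the compact support of $\psi$, so $t\mapsto\eL{t}\eta_0(\psi(t)) = \eta_0(\eL{t}\psi(t))$ is continuous, hence measurable, into $L^2(\Omega)$. Combining the estimate $E(\eta_0(f))^2\leq\|f\|_{\F{\sigma_0}}^2$ with the continuity and compact support of $t\mapsto\|\eL{t}\psi(t)\|_{\F{\sigma_0}}$ then gives $\intT{E(\eL{t}\eta_0(\psi(t)))^2}<\infty$.

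The heart of the argument is identity~\rfb{solutions-equation}, and the key observation is that $t\mapsto\eL{t}\psi(t)$ is exactly the antiderivative we need. Working one coordinate at a time, for a single $\psi\in\DT$ I would write $\eL{t}\eta_0(\B\psi(t)) = \eta_0(\eL{t}\B\psi(t))$ and, since $\eta_0$ is a bounded linear map into $L^2(\Omega)$, commute it past the Bochner integral to obtain $\intT{\eta_0(\eL{t}\B\psi(t))} = \eta_0\big(\intT{\eL{t}\B\psi(t)}\big)$. The remark preceding the theorem supplies $\B\psi(t)\in\CTL$, so in particular $\LL^*\psi(t)\in\F{\sigma_0}$, and since $D\subset\Dom(\LL^*)$ the test slice $\psi(t)$ lies in the domain of the generator of the restricted semigroup. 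Consequently $t\mapsto\eL{t}\psi(t)$ is continuously differentiable into $\F{\sigma_0}$ with, by the product rule and the commutation of $\LL^*$ with $\eL{t}$, derivative $\tfrac{d}{dt}\eL{t}\psi(t) = \eL{t}(\LL^*\psi(t)+\dt\psi(t)) = \eL{t}\B\psi(t)$. The fundamental theorem of calculus and the compact support of $\psi$ then collapse the integral to $\intT{\eL{t}\B\psi(t)} = \big[\eL{t}\psi(t)\big]_{0}^{\infty} = -\psi(0)$, whence $\intT{\eL{t}\eta_0(\B\psi(t))} = -\eta_0(\psi(0))$. Applying this to each coordinate shows that the two sides of~\rfb{solutions-equation} in fact coincide almost surely, which is stronger than the required equality in distribution.

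The step I expect to demand the most care is justifying that $t\mapsto\eL{t}\psi(t)$ is $C^1$ in the norm of $\F{\sigma_0}$ rather than merely in $L^2(dx)$; this is precisely where $\LL^*$-admissibility of $\F{\sigma_0}$ and the membership $\LL^*\psi(t)\in\F{\sigma_0}$ are indispensable, since without them neither the product rule for the semigroup orbit nor the subsequent interchange of $\eta_0$ with the Bochner integral would be licensed.
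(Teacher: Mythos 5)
Your proposal is correct and takes essentially the same route as the paper's proof: local integrability via continuity of $t\mapsto\eL{t}\psi(t)$ in $\F{\sigma_0}$, the key identity $\eL{t}\B\psi(t)=\dt\big(\eL{t}\psi(t)\big)$, interchange of $\eta_0$ with the Bochner integral (the paper invokes Hille's theorem, which for the bounded operator $\eta_0$ is exactly your argument), and the fundamental theorem of calculus yielding $-\eta_0(\psi(0))$. Your careful justification of the $C^1$ property of $t\mapsto\eL{t}\psi(t)$ in the $\F{\sigma_0}$ norm, via $\LL^*\psi(t)\in\F{\sigma_0}$ and admissibility, merely fills in what the paper labels ``a direct calculation.''
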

    \begin{proof}
      First, we show that the function $t\mapsto\eL{t}\psi(t)$ is continuous for every $\psi\in \CTL$.
      Recall that there exist constants $M\geq1$ and $\omega\geq0$ such that $\|\eL{t}\|_{\F{\sigma_0}}\leq Me^{t\omega}$ (see \cite[Ch.~1,~Thm.~2.2]{MR710486}).
      Denote by $T$ the upper bound of the support of $\psi$ and let $t_n\to t$. Then
      \begin{equation*}
        \begin{split}
          \|\eL{t}\psi(t)& - \eL{t_n}\psi(t_n)\|_{\F{\sigma_0}}\\
          &\leq \|(\eL{t}-\eL{t_n})\psi(t)\|_{\F{\sigma_0}}+\|\eL{t_n}(\psi(t)-\psi(t_n))\|_{\F{\sigma_0}}\\
          &\leq \|(\eL{t}-\eL{t_n})\psi(t)\|_{\F{\sigma_0}}+Me^{T\omega}\|\psi(t)-\psi(t_n)\|_{\F{\sigma_0}}.
        \end{split}
      \end{equation*}
      It follows from the continuity of $\psi$ that $\lim_{n\to\infty}\|\psi(t)-\psi(t_n)\|_{\F{\sigma_0}} = 0$. Since the semigroup is strongly continuous,
      the function $t\mapsto\eL{t}\phi$ is continuous for every $\phi\in \F{\sigma_0}$ (see \cite[Ch.~1,~Cor.~2.3]{MR710486}) and therefore
      \begin{equation*}
        \lim_{n\to\infty}\|(\eL{t}-\eL{t_n})\psi(t)\|_{\F{\sigma_0}} = 0.
      \end{equation*}
      Because $\eta_0$ is a continuous operator, the function $e^{t\LL}\eta_0(\psi(t)) = \eta_0(\eL{t}\psi(t))$ is also continuous
      and hence measurable. Now,
      \begin{multline*}
        \intT{E(e^{t\LL}\eta_0(\psi(t)))^2} = \intT{E(\eta_0(\eL{t}\psi(t)))^2} \\= \intT{\int_\X|\FT{\eL{t}\psi(t)}|^2\,d\sigma_0} \leq T\cdot\sup_{t\in\Tb}\|\eL{t}\psi(t)\|_{\F{\sigma_0}}^2 < \infty.
      \end{multline*}
      This completes the proof of the local integrability of $e^{t\LL}\eta_0$  with respect to $\F{\sigma_0}$ (see Definition~\ref{locint}).
      Because $\eL{t}$ is a $C_0$-semigroup on $\F{\sigma_0}$, a direct calculation shows
      \begin{equation*}
        \eL{t}\B\psi(t) = \dt\big(\eL{t}\psi(t)\big)\quad\text{for every $\psi(t)\in \DT$}.
      \end{equation*}
      This relation together with the fact that $\B\psi(t)\in \CTL$ mean that both $\eL{t}\B\psi(t)$ and $\eta_0\big(\dt\big(\eL{t}\psi(t)\big)\big)$
      are integrable with respect~to~$t$. Finally, we apply the Hille theorem (see \cite[Ch.~2,~Thm.~6]{MR0453964}) to obtain
      \begin{equation*}
        \intT{\eta_0\big(\dt\big(\eL{t}\psi(t)\big)\big)} = \eta_0\Big(\intT{\dt(\eL{t}\psi(t))}\Big) = \eta_0(-\psi(0)).
      \end{equation*}
      Hence $e^{t\LL}\eta_0$ satisfies the conditions of Definition~\ref{solutions}.
    \end{proof}
    Our next goal is to show that a solution to problem~\rfb{heat} is unique under a suitable assumption of continuity.
    \begin{definition}\label{def-strong-continuity}
      Let $\eta$ be a broad-sense stationary random field with spectral measure $\sigma$. A family $\A_t\eta$ of transformed broad-sense stationary random fields
      is called strongly continuous if for every $\phi\in \F{\sigma}$ the mapping $t\mapsto \A^*_t \phi$ is continuous.
    \end{definition}
    \begin{rem}\label{strong-continuity-of-semigroup}
    It follows directly from the above definition, that for every broad-sense stationary random field $\eta$,
    if $\A_t$ is a $C_0$-semigroup on $\F{\sigma}$, then $\A_t\eta$ is strongly continuous.
    \end{rem}
    \begin{definition}
      We say that $\psi_n\to\psi$ in $\CTL$ if there exists a compact set $K$ such that $\bigcup_{n}\supp\psi_n\cup\supp\psi\subset K$ and
      \begin{equation*}
        \lim_{n\to\infty}\sup\|\psi_n(t)-\psi(t)\|_{\F{\sigma_0}}=0.
      \end{equation*}
    \end{definition}
    \begin{lemma}\label{strong-continuity}
      If $\psi_n\to\psi$ in $\CTL$ and $u(t)$ is a strongly continuous family of random fields, then
      \begin{equation*}
        \lim_{n\to\infty}\sup_{t\in\T}\|u(t)(\psi_n(t)-\psi(t))\|_{L^2(\Omega)} = 0.
      \end{equation*}
    \end{lemma}
    \begin{proof}
      Denote $u(t) = \A_t\eta_0$.
      Since $t \mapsto \A^*_t\phi$ is continuous for every $\phi$, it is bounded on compact sets.
      Thus, from the Banach-Steinhaus theorem applied to the family $\A^*_t$, we obtain that for every compact set $K$
      \begin{equation*}
        \sup_{t\in K}\|\A^*_t\|_{\F{\sigma_0}\to \F{\sigma_0}} = \sup\{\|\A^*_t\phi\|_{\F{\sigma_0}}\,:\,\|\phi\|_{\F{\sigma_0}}=1\} = C_K < \infty.
      \end{equation*}
      Therefore
      \begin{align*}
        \lim_{n\to\infty}&\sup_{t\in\T}\|u(t)(\psi_n(t)-\psi(t))\|_{L^2(\Omega)}\\
        &\leq \lim_{n\to\infty} \sup_{t\in\T}\|\A_t^*(\psi_n(t)-\psi(t))\|_{\F{\sigma_0}}\\
        &\leq C_K \lim_{n\to\infty} \sup_{t\in K}\|\psi_n(t)-\psi(t)\|_{\F{\sigma_0}} = 0,
      \end{align*}
      where $K\supset\bigcup_{n}\supp\psi_n\cup\supp\psi$.
    \end{proof}
    \begin{lemma}\label{integral}
      Let $u_1(t)$ and  $u_2(t)$ be strongly continuous families of random fields.
      If for every vector $(\psi^1,\ldots,\psi^k)\in \CTL^k$ we have
      \begin{equation}\label{inteq}
        \intT{u_1(t)\big(\psi^1(t),\ldots,\psi^k(t)\big)}\=\intT{u_2(t)\big(\psi^1(t),\ldots,\psi^k(t)\big)},
      \end{equation}
      then $u_1(t)\=u_2(t)$.
    \end{lemma}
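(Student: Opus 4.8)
The plan is to recover the full finite-dimensional distributions of the two families from the integral identity \rfb{inteq} by testing against functions that, in the time variable, concentrate around prescribed points. Fix $k\in\N$, times $t_1,\dots,t_k\in\T$ and elements $\phi_1,\dots,\phi_k\in\F{\sigma_0}$, and for $i=1,2$ put $v^j_i(t)=u_i(t)(\phi_j)$. By strong continuity (Definition~\ref{def-strong-continuity}) the adjoint operators defining $u_1$ and $u_2$ depend continuously on $t$ as maps into $\F{\sigma_0}$; composing with the underlying stationary fields, which are continuous operators $\F{\sigma_0}\to L^2(\Omega)$, shows that each map $t\mapsto v^j_i(t)$ is a continuous $L^2(\Omega)$-valued function.

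Next I would choose, for every $j$, a sequence of nonnegative scalar functions $g^j_n\in C_c(\T)$ supported in neighbourhoods of $t_j$ shrinking to $\{t_j\}$ (supported in $[0,1/n]$ if $t_j=0$) and normalised by $\intT{g^j_n}=1$, so that $(g^j_n)_n$ is an approximate identity at $t_j$. Then $\psi^j(t)=g^j_n(t)\phi_j$ lies in $\CTL$, and by linearity of each $u_i(t)$ in its argument together with the elementary scaling property of the Bochner integral,
\begin{equation*}
  \intT{u_i(t)(\psi^j(t))}=\intT{g^j_n(t)\,v^j_i(t)}.
\end{equation*}
Since $\intT{g^j_n}=1$, continuity of $v^j_i$ at $t_j$ gives the estimate
\begin{equation*}
  \Big\|\intT{g^j_n(t)\,v^j_i(t)}-v^j_i(t_j)\Big\|_{L^2(\Omega)}\leq\sup_{t\in\supp g^j_n}\|v^j_i(t)-v^j_i(t_j)\|_{L^2(\Omega)}\longrightarrow 0
\end{equation*}
as $n\to\infty$. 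Consequently, componentwise in $L^2(\Omega)$ and hence in $L^2(\Omega)^k$,
\begin{equation*}
  \Big(\intT{g^1_n v^1_i},\dots,\intT{g^k_n v^k_i}\Big)\longrightarrow\big(u_i(t_1)(\phi_1),\dots,u_i(t_k)(\phi_k)\big)\qquad(n\to\infty).
\end{equation*}

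Finally, for each fixed $n$ the hypothesis \rfb{inteq} applied to the vector $(\psi^1,\dots,\psi^k)$ asserts that the two vectors above (for $i=1$ and $i=2$) coincide in distribution. Convergence in $L^2(\Omega)^k$ implies convergence in probability, hence in distribution; since distributional limits are unique (equal characteristic functions pass to the limit via Lévy's theorem), letting $n\to\infty$ yields
\begin{equation*}
  \big(u_1(t_1)(\phi_1),\dots,u_1(t_k)(\phi_k)\big)\=\big(u_2(t_1)(\phi_1),\dots,u_2(t_k)(\phi_k)\big).
\end{equation*}
As $k$, the times $t_j$ and the elements $\phi_j$ were arbitrary, this is precisely the assertion $u_1(t)\=u_2(t)$.

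I expect the substantive content to be the concentration argument and its interplay with the hypothesis, while the remaining points are routine bookkeeping: verifying $g^j_n(\cdot)\phi_j\in\CTL$ so that \rfb{inteq} genuinely applies, justifying pulling the scalar $g^j_n(t)$ out of both the random field and the Bochner integral, and treating the boundary case $t_j=0$. The stability of equality in distribution under $L^2(\Omega)$-convergence is then standard.
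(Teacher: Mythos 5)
Your proof is correct, and at its core it uses the same idea as the paper's: recover the value of $u_i$ at a fixed time from the integral identity by testing against functions whose time dependence concentrates at that time, then pass equality in distribution to the limit. The differences are in execution, and yours is arguably cleaner. The paper fixes a single time $s$, approximates the discontinuous function $t\mapsto \mathbbm{1}_{[s,s+h]}(t)\phi^j$ pointwise by elements of $\CTL$ (invoking dominated convergence), and then performs a second, Lebesgue-differentiation-type limit, letting $h\to 0$ in $\frac{1}{h}\int_s^{s+h}u_i(t)(\phi^1,\ldots,\phi^k)\,dt$ via strong continuity. You collapse these two limits into one by using a normalized continuous approximate identity $g^j_n$, so no indicator approximation and no averaging step are needed; your quantitative bound $\sup_{t\in\supp g^j_n}\|v^j_i(t)-v^j_i(t_j)\|_{L^2(\Omega)}$ replaces both. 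Moreover, you allow distinct concentration times $t_1,\ldots,t_k$ in the different components and arbitrary $\phi_j\in\F{\sigma_0}$ rather than $\phi_j\in D$, so you obtain joint equality of distributions across several times, which is strictly stronger than the single-time conclusion $u_1(t)\=u_2(t)$ that the lemma asserts and that the paper's proof (with its single $s$) delivers. The routine points you flag --- membership of $g^j_n(\cdot)\phi_j$ in $\CTL$, pulling the scalar $g^j_n(t)$ out of the field and the Bochner integral, the one-sided bump at $t_j=0$ --- are indeed routine and check out.
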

    \begin{proof}
      First we fix a vector $(\phi^1,\ldots,\phi^k)\in D^k$ and a point $s\in\T$.
      For every $j=1,\ldots,k$, let us choose a sequence $\psi^j_{n,1}(t)\in \CTL$ such that
      \begin{equation*}
        \lim_{n\to\infty}\psi^j_{n,1}(t)=\left\{\begin{aligned}&\phi^j,\quad t\in[s,s+h]\\&0\end{aligned}\right.
      \end{equation*}
      Define $\psi^j_{n,h}(t) = \psi^j_{n,1}(\frac{1}{h}(t-s)+s)$. By the Lebesgue dominated convergence theorem we then obtain for $i=1,2$
      \begin{equation*}
        \lim_{n\to\infty}\intT{u_i(t)(\psi^1_{n,h}(t),\ldots,\psi^k_{n,h}(t))} = \int_s^{s+h}{u_i(s)(\phi^1,\ldots,\phi^k)}\,dt.
      \end{equation*}
      Therefore by \rfb{inteq} we also have
      \begin{equation*}
        \frac{1}{h}\int_s^{s+h}{u_1(t)(\phi^1,\ldots,\phi^k)}\,dt \= \frac{1}{h}\int_s^{s+h}{u_2(s)(\phi^1,\ldots,\phi^k)}\,dt.
      \end{equation*}
      By strong continuity of $u_1$ and $u_2$, when we pass to the limit with $h\to0$ on both sides of the above expression, we obtain
      \begin{equation*}
        u_1(s)(\phi^1,\ldots,\phi^k)\=u_2(s)(\phi^1,\ldots,\phi^k),
      \end{equation*}
      which ends the proof, since $\phi^1,\ldots,\phi^k$ and $s$ were arbitrary.
    \end{proof}
    \begin{theorem}[Uniqueness of solutions]\label{uniqueness}
      Assume {\rm(\ref{heat}a)} and {\rm(\ref{heat}b)}.
      If $u_1$ and $u_2$ are two strongly continuous solutions to problem \rfb{heat}, then $u_1(t) \= u_2(t)$ for every $t\geq 0$.
    \end{theorem}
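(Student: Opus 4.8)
\section*{Proof proposal}

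The plan is to reduce the theorem to Lemma~\ref{integral}, whose hypothesis \rfb{inteq} is the only thing that needs to be checked. Since $u_1$ and $u_2$ are both solutions, for every $(\psi_1,\ldots,\psi_k)\in\DT^k$ the right-hand side of \rfb{solutions-equation} is the same for both, so that
\begin{equation*}
  \intT{u_1(t)\big(\B\psi_1(t),\ldots,\B\psi_k(t)\big)}\=\intT{u_2(t)\big(\B\psi_1(t),\ldots,\B\psi_k(t)\big)}.
\end{equation*}
This is precisely \rfb{inteq}, but only for test functions lying in the range $\{\B\psi:\psi\in\DT\}$ instead of for arbitrary elements of $\CTL$. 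First I would record this reduction and isolate the remaining task: to upgrade the equality from this range to all of $\CTL^k$.

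The heart of the argument is therefore a density statement: the set $\{\B\psi:\psi\in\DT\}$ is dense in $\CTL$. Given $\phi\in\CTL$ with $\supp\phi\subset[0,T]\times\X$, the natural candidate solving $\B\psi=\phi$ is obtained by backward integration along the semigroup,
\begin{equation*}
  \Psi(t) = -\int_t^\infty \eL{(s-t)}\phi(s)\,ds,
\end{equation*}
which is well defined and supported in $[0,T]$ because $\F{\sigma_0}$ is $\LL^*$-admissible, and which solves $\B\Psi=\phi$ in the mild sense (the identity $\eL{t}\B\psi(t)=\dt(\eL{t}\psi(t))$ used in the proof of Theorem~\ref{existence} is exactly its differentiated form). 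The remaining work is to replace $\Psi$ by genuine elements of $\DT$: I would regularise $\phi$ by mollification in space and time and truncate in space, using admissibility to keep every operation inside $\F{\sigma_0}$, and check that the resulting $\psi_n\in\DT$ satisfy $\B\psi_n\to\phi$ in $\CTL$, i.e.\ uniformly in the $\F{\sigma_0}$-norm with supports in a common compact set.

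With the density in hand, the passage to the limit is routine. For each $j$ pick $\psi^j_n\in\DT$ with $\B\psi^j_n\to\phi^j$ in $\CTL$; by Lemma~\ref{strong-continuity}, applied to the strongly continuous families $u_1,u_2$, we obtain $\sup_t\|u_i(t)(\B\psi^j_n(t)-\phi^j(t))\|_{L^2(\Omega)}\to0$, and since all supports sit in a fixed $[0,T]$ this yields convergence of the Bochner integrals $\intT{u_i(t)(\B\psi^j_n(t))}\to\intT{u_i(t)(\phi^j(t))}$ in $L^2(\Omega)$, hence jointly in $L^2(\Omega;\R^k)$ and therefore in distribution. As the two distributions coincide for every $n$ and weak limits are unique, \rfb{inteq} holds for all $(\phi^1,\ldots,\phi^k)\in\CTL^k$, and Lemma~\ref{integral} then gives $u_1(t)\=u_2(t)$ for every $t$.

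The main obstacle is the density step: constructing, for an arbitrary $\phi\in\CTL$, approximants $\psi_n\in\DT$ whose images $\B\psi_n$ converge to $\phi$ \emph{uniformly in the $\F{\sigma_0}$-norm}. The difficulty is that $\B$ contains the unbounded operator $\LL^*$ and that semigroup orbits of $D$ leave $D$, so controlling $\B\psi_n$ in the strong $\F{\sigma_0}$-topology (rather than merely in $L^2(dx)$) is delicate; it is here that the $\LL^*$-admissibility of $\F{\sigma_0}$ from {\rm(\ref{heat}b)} must be used to keep the construction within $\F{\sigma_0}$.
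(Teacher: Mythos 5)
Your overall architecture coincides with the paper's: both solutions satisfy \rfb{inteq} for test functions of the form $\B\psi$ with $\psi\in\DT$; one then upgrades this to all of $\CTL$ by a density argument, passes to the limit using Lemma~\ref{strong-continuity} (together with the common compact support in time), and concludes with Lemma~\ref{integral}. Those parts of your proposal are correct and match the paper's proof of Theorem~\ref{uniqueness} essentially step by step.

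The genuine gap is exactly where you locate it: the density of $\B(\DT)$ in $\CTL$ is asserted and sketched, not proved, and the sketch does not go through as written. Your candidate $\Psi(t)=-\int_t^\infty \eL{(s-t)}\phi(s)\,ds$ is the right mild solution (up to time reversal it is the same one the paper uses), but the proposed repair by mollification and spatial truncation collides with the obstacles you yourself name and does not remove them: $\LL^*$ need not commute with convolution against a mollifier or with cut-offs, $\Psi(t)$ need not lie in $\Dom(\LL^*)$, and one must control $\B\psi_n-\phi$ uniformly in $t$ in the $\F{\sigma_0}$-norm, not merely in $L^2(dx)$. The paper does not attempt any such construction; it isolates the density statement as Theorem~\ref{separable-density} in the Appendix and proves it by a different, non-constructive mechanism: since $\F{\sigma_0}$ is separable (Remark~\ref{separability}) and contains $D$ as a dense subspace, the abstract homotopy-density result Lemma~\ref{torunczyk} (an ANR-theoretic statement) gives that $C(\Tb,D)$ is dense in $C(\Tb,\F{\sigma_0})$, so the mild solution can be approximated uniformly by $u_n\in C_c^\infty(\Tb\times\X)\subset\DT$, whence $\B u_n\to g$. (Even in the paper this last implication is stated tersely --- $\B$ is unbounded, so uniform convergence of $u_n$ alone does not formally yield convergence of $\B u_n$ --- but that is the paper's burden, not a license to omit the step.) The step you defer to ``delicate checking'' is precisely the mathematical content that distinguishes this theorem from a routine approximation argument; without a proof of the density of $\B(\DT)$ in $\CTL$ --- via Lemma~\ref{torunczyk} or otherwise --- the uniqueness proof is incomplete.
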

    \begin{proof}
      Since the space $\F{\sigma_0}$ is separable, by Theorem~\ref{separable-density} in Appendix, the image $\B(\DT)$ is dense in $\CTL$.
      Thus for every $\psi\in \CTL$ we may find a sequence
      \begin{equation*}
      \psi_n=\B\phi_n,\quad\text{for some $\phi_n\in \DT$},
      \end{equation*}
      such that $\psi_n\to\psi$ in $\CTL$.

      Then by Lemma~\ref{strong-continuity},
      \begin{equation*}
        \lim_{n\to\infty}\sup_{t\in\T}\|u_i(t)(\psi_n(t)-\psi(t))\|_{L^2(\Omega)} = 0,\quad\text{$i=1,2$}.
      \end{equation*}
      Thus
      \begin{multline}\label{limit-u}
        \lim_{n\to\infty}\intT{\|u_i(t)(\psi_n(t)-\psi(t))\|_{L^2(\Omega)}}
        \\\leq \lambda(K)\lim_{n\to\infty}\sup_{t\in\T}\|u_i(t)(\psi_n(t)-\psi(t))\|_{L^2(\Omega)} = 0,\quad\text{$i=1,2$},
      \end{multline}
      where $K\supset\bigcup_{n}\supp\psi_n\cup\supp\psi$.

      Let us fix a vector $(\psi^1,\ldots,\psi^k)\in \CTL^k$. For each of its elements we find a sequence
      $(\phi^i_n)_{n=1}^\infty\subset \DT$ such that $\B\phi^i_n \to \psi^i$ in $\CTL$. Because $u_1$ and $u_2$ are both solutions to problem~\rfb{heat},
      for every $n$ we have
      \begin{multline*}
        \intT{u_1(t)\big(\B\phi^1_n(t),\ldots,\B\phi^k_n(t)\big)}  \\ \= \intT{u_2(t)\big(\B\phi^1_n(t),\ldots,\B\phi^k_n(t)\big)}.
      \end{multline*}
      Using \rfb{limit-u} we may pass to the limit on both sides of this equality in order to obtain
      \begin{equation*}
        \intT{u_1(t)\big(\psi^1(t),\ldots,\psi^k(t)\big)}  \= \intT{u_2(t)\big(\psi^1(t),\ldots,\psi^k(t)\big)}.
      \end{equation*}
      Then it follows from Lemma~\ref{integral} that $u_1(s) \= u_2(s)$.
    \end{proof}

    Let us conclude this section by summarizing the above results.
    \begin{rem}\label{one-solution}
    Assume {\rm(\ref{heat}a)} and {\rm(\ref{heat}b)}.
    Let a strongly continuous family of transformed broad-sense stationary random fields $u(t)$ be a solution to problem~\rfb{heat} in the sense of Definition~\ref{solutions}.
    Then Theorems \ref{existence} and \ref{uniqueness}, as well as~Remark~\ref{strong-continuity-of-semigroup}, imply
    \begin{equation*}
     u(t) \= e^{t\mathcal{L}}\eta_0,
    \end{equation*}
    i.e.~the semigroup solution $e^{t\LL}\eta_0$ is the unique strongly continuous solution of problem~\rfb{heat},
    up to the equivalence in finite-dimensional distributions.
    \end{rem}
  \end{section}
  \begin{section}{Scaling limits of semigroup solutions}
    In this section we study certain scaling properties of the semigroup solution to Cauchy problem~\rfb{heat}, which is unique
    under conditions outlined in Remark~\ref{one-solution}.

    To simplify the notation we introduce a family of scaling transformations $\nu_r$ defined for every $r>0$ by
    \begin{align*}
      &\nu_r\phi(x) = r^d\phi(rx)\quad\text{for every $\phi\in L^2(dx)$},\\
      &\nu_r Z(A) = Z(r A)\quad\text{for every orthogonal random measure $Z$}.
    \end{align*}
    We use the same symbol for different scalings of functions and measures. This is done in order to obtain the following
    identity
    \begin{equation}\label{fourier-scaling}
      \int_\X\FT{\nu_r \phi}\,dZ = \int_\X\FT{\phi}\,d(\nu_rZ),
    \end{equation}
    which is an immediate consequence of change of variables in the definition of the Fourier transform.
    For a generalised random field $\xi$ we write $\xi\nu_r$ to mean $\xi\nu_r(\phi) = \xi(\nu_r \phi)$ for every $\phi\in D$.
    \begin{theorem}[Scaling limit]\label{scaling}
      Let $Z_0$ be the orthogonal random measure associated with the initial condition $\eta_0$ to problem~\rfb{heat} and $\sigma_0$ be its
      spectral measure.
      Suppose there are constants $\alpha$,~$\gamma\in\R$ and an orthogonal random measure $Z$ with a reference measure $\sigma$ such that
      \begin{equation}\label{measure-scaling}
        \wlim_{T\to\infty}T^{\gamma} \nu_{T^\alpha} Z_0 \= Z.
      \end{equation}
      Suppose further, there is a constant $\beta\in\R$ and an operator $\P$ satisfying {\rm(\ref{heat}b)} and such that $\F{\sigma}$ is also $\P^*$-admissible, and
      \begin{equation}\label{semigroup-scaling}
        \lim_{T\to\infty}T^{2\gamma}\int_\X|\FT{\eL{tT}(\phiT)- \nu_{T^\alpha} (e^{t\P^*}\phi)}|^2\,d\sigma_0 = 0\quad\text{for every $\phi\in D$}.
      \end{equation}
      Then for the broad-sense stationary random field $\eta(\phi) = \int_\X\FT{\phi}\,dZ$ we have
      \begin{equation*}
        \wlim_{T\to\infty}T^\gamma (e^{Tt\LL}\eta_0)\nu_{T^\beta} \= e^{t\P}\eta.
      \end{equation*}
    \end{theorem}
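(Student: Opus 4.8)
The plan is to carry everything over to the spectral side, where both fields become stochastic integrals against $Z_0$, and then to interpose an auxiliary quantity built from the limiting semigroup $e^{t\P}$ so that the two hypotheses can be invoked separately. Hypothesis~\rfb{semigroup-scaling} will show that the genuine (rescaled) solution differs from this auxiliary quantity by something negligible in $L^2(\Omega)$, while \rfb{measure-scaling} will identify the weak limit of the auxiliary quantity; a converging-together (Slutsky) argument then glues the mean-square approximation to the weak limit.

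Fix $t\geq 0$. Since the fields on both sides are linear in the test function and finite-dimensional distributions are tested against vectors $(\phi_1,\dots,\phi_k)\in D^k$, the Cramér--Wold device \cite{MR1324786} reduces the claim to the one-dimensional statement that, for every single $\phi\in D$,
\[
  X_T := T^\gamma (e^{Tt\LL}\eta_0)\nu_{T^\beta}(\phi) \longrightarrow e^{t\P}\eta(\phi) =: Y \quad\text{in distribution.}
\]
Using Definition~\ref{tbssrf} and the spectral extension $\eta_0(f)=\int_\X\FT{f}\,dZ_0$, the left-hand side reads $X_T = T^\gamma\int_\X\FT{\eL{Tt}\phiT}\,dZ_0$, which is well defined since $\phiT\in D\subset\F{\sigma_0}$ and the $\LL^*$-admissibility of $\F{\sigma_0}$ gives $\eL{Tt}\phiT\in\F{\sigma_0}$, so the integrand lies in $L^2(\sigma_0)$. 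I then introduce the auxiliary variable
\[
  W_T := T^\gamma\int_\X\FT{\nu_{T^\alpha}(e^{t\P^*}\phi)}\,dZ_0 = \int_\X\FT{e^{t\P^*}\phi}\,d\big(T^\gamma\nu_{T^\alpha}Z_0\big),
\]
the second equality being the scaling identity~\rfb{fourier-scaling}; note that $e^{t\P^*}\phi\in\F{\sigma}$ by the $\P^*$-admissibility of $\F{\sigma}$, so $\FT{e^{t\P^*}\phi}\in L^2(\sigma;\C)$.

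The two hypotheses now enter cleanly. By the isometry $E|\int_\X f\,dZ_0|^2=\int_\X|f|^2\,d\sigma_0$ and linearity,
\[
  E|X_T-W_T|^2 = T^{2\gamma}\int_\X\big|\FT{\eL{Tt}\phiT-\nu_{T^\alpha}(e^{t\P^*}\phi)}\big|^2\,d\sigma_0,
\]
which tends to $0$ by~\rfb{semigroup-scaling}; hence $X_T-W_T\to 0$ in $L^2(\Omega)$ and therefore in probability. On the other hand, \rfb{measure-scaling} states precisely that $\wlim_{T\to\infty}T^\gamma\nu_{T^\alpha}Z_0\=Z$, so applying the definition of weak convergence of random measures to the fixed integrand $\FT{e^{t\P^*}\phi}$ yields $\wlim_{T\to\infty}W_T\=\int_\X\FT{e^{t\P^*}\phi}\,dZ=\eta(e^{t\P^*}\phi)=e^{t\P}\eta(\phi)=Y$. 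Since $W_T\to Y$ in distribution and $X_T-W_T\to 0$ in probability, Slutsky's theorem gives $X_T\to Y$ in distribution, which by the Cramér--Wold reduction completes the proof.

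I expect the main obstacle to be conceptual rather than computational: recognizing that one must insert the intermediate field $W_T$ — the $\P$-evolved test function integrated against the rescaled measure — in order to decouple the two hypotheses, and then fusing a mean-square (hence in-probability) approximation with a weak limit via Slutsky's theorem. The accompanying technical care lies in verifying that every stochastic integral is well defined, i.e.\ that the transformed and rescaled test functions land in the correct $L^2$ spaces; this is exactly where the $\LL^*$-admissibility of $\F{\sigma_0}$, the $\P^*$-admissibility of $\F{\sigma}$, and the integrability implicit in~\rfb{semigroup-scaling} are used.
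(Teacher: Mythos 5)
Your proposal is correct and follows essentially the same route as the paper: you insert exactly the same intermediate quantity (the $\P$-evolved test function, rescaled and integrated against $Z_0$), use hypothesis \rfb{semigroup-scaling} together with the $L^2$-isometry to kill the difference term, and use \rfb{measure-scaling} via the identity \rfb{fourier-scaling} to identify the limit of the remaining term. The only cosmetic differences are that you name Slutsky's theorem and the Cram\'er--Wold device explicitly (the paper handles vectors of test functions directly and leaves the converging-together step implicit), which is a presentational refinement rather than a different argument.
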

    \begin{proof}
      For every $\phi\in D$ we have the representation (see Theorem~\ref{karhunen})
      \begin{equation*}
        e^{Tt\LL}\eta_0(\phiT) = \int_\X\FT{\eL{tT}(\phiT)}\,dZ_0.
      \end{equation*}
      We rewrite this expression as the following sum
      \begin{multline*}
        \int_\X\FT{\eL{tT}(\phiT)}\,dZ_0 = \\
        = \int_\X\FT{\eL{tT}(\phiT) -  \nu_{T^\alpha} (e^{t\P^*}\phi)}\,dZ_0 + \int_\X\FT{\nu_{T^\alpha} (e^{t\P^*}\phi)}\,dZ_0.
      \end{multline*}
      For the first term on the right-hand side we use assumption~\rfb{semigroup-scaling} in order to obtain
      \begin{multline*}
        \lim_{T\to\infty} E\Big|T^\gamma\int_\X\FT{\eL{tT}(\phiT) - \nu_{T^\alpha}(e^{t\P^*}\phi)}\,dZ_0\Big|^2 \\
        = \lim_{T\to\infty}T^{2\gamma}\int_\X|\FT{\eL{tT}(\phiT) - \nu_{T^\alpha}(e^{t\P^*}\phi)}|^2\,d\sigma_0 = 0.
      \end{multline*}
      For the second term, because of identity~\rfb{fourier-scaling}, we have
      \begin{equation*}
        \int_\X \FT{\nu_{T^\alpha}(e^{t\P^*}\phi)}\,dZ_0 = \int_\X\FT{e^{t\P^*}\phi}\,d(\nu_{T^\alpha} Z_0).
      \end{equation*}
      Let us fix a vector $(\phi^1,\ldots,\phi^k)\in D^k$. It follows from assumption~\rfb{measure-scaling} that
      \begin{multline*}
        \wlim_{T\to\infty}T^{\gamma} \int_\X \big(\FT{e^{t\P^*}\phi^1},\ldots,\FT{e^{t\P^*}\phi^k}\big)\,d(\nu_{T^\alpha} Z_0) \\
        \= \int_\X \big(\FT{e^{t\P^*}\phi^1},\ldots,\FT{e^{t\P^*}\phi^k}\big)\,dZ = e^{t\P}\eta(\phi^1,\ldots,\phi^k)
      \end{multline*}
      and finally
      \begin{equation*}
        \wlim_{T\to\infty}T^\gamma (e^{Tt\LL}\eta_0)\nu_{T^\beta} \= e^{t\P}\eta.\qedhere
      \end{equation*}
    \end{proof}
    \begin{rem}\label{trivial-scaling}
      Notice that if for a suitable choice of $\alpha$, $\beta$, $\gamma$ either
      \begin{equation*}
        T^{\gamma} \nu_{T^\alpha} Z_0 \= Z_0
     \end{equation*}
     or
     \begin{equation*}
       \eL{tT}(\phiT) = \nu_{T^\alpha}(\eL{t}\phi)\quad\text{for every $\phi\in D$},
     \end{equation*}
     then one of the hypotheses \rfb{measure-scaling} and \rfb{semigroup-scaling} in Theorem~\ref{scaling} is trivially satisfied.
    \end{rem}
    \begin{rem}
      Observe that in the context of Theorem~\ref{scaling}
      the limit family of random fields $e^{t\P}\eta$ is a solution to the following problem
      \begin{equation*}
        \left\{
        \begin{aligned}
          &\dt u = \P u\quad&\text{on $\T\times\X$}, \\
          &u(0) \= \eta\quad&\text{on $\X$}.
        \end{aligned}
        \right.
      \end{equation*}
    \end{rem}
  \end{section}
  \begin{section}{Examples}
    \begin{subsection}{Heat equation with Gaussian noise}\label{heat-ex}
      First, we illustrate and explain our results in the simplest case of the heat equation
      \begin{equation}\label{heat2}
        \left\{
        \begin{aligned}
          &\dt u = \Delta u\quad&\text{on $\T\times\R$}, \\
          &u(0) \= \eta_0\quad&\text{on $\R$}.
        \end{aligned}
        \right.
      \end{equation}
      Here, $\Delta=\partial_x^2$ denotes the Laplace operator, or more precisely, its closure in $L^2(dx)$ with the domain $\Dom(\Delta) = W^{2,2}(\R)$ (the usual Sobolev space).
      We introduce the orthogonal random measure $Z_0$ associated with $\eta_0$ by the means of the formula (see Theorem~\ref{karhunen})
      \begin{equation*}
        \eta_0(\phi) = \int_\R\FT{\phi}\,Z_0(dx)\quad\text{for all $\phi\in D$}.
      \end{equation*}
      \begin{theorem}\label{heat-solution}
        Keeping the above notation, assume there exists a function $f>0$ such that
        \begin{equation}\label{spectrum}
          Z_0(A) = \int_A\sqrt{f(x)}\,W(dx),
        \end{equation}
        where $W$ is the white-noise orthogonal random measure (see Example~\ref{white-noise}).
        Then the family of transformed broad-sense stationary random fields $e^{t\Delta}\eta_0$ is the unique strongly continuous solution
        to problem~\rfb{heat2}, up to the equivalence in finite-di\-men\-sio\-nal distributions.
      \end{theorem}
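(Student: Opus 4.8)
The plan is to show that the data of problem~\rfb{heat2} satisfy hypotheses (\ref{heat}a) and (\ref{heat}b), so that the conclusion follows at once from Remark~\ref{one-solution} (which packages Theorems~\ref{existence} and~\ref{uniqueness} together with Remark~\ref{strong-continuity-of-semigroup}). Hypothesis (\ref{heat}a) is assumed, so all the work is in verifying (\ref{heat}b): that $\Delta$ generates a $C_0$-semigroup on $L^2(dx)$, that $D\subset\Dom(\Delta^*)$, and that $\F{\sigma_0}$ is $\Delta^*$-admissible.

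First I would record the structural facts. The operator $\Delta$ with domain $W^{2,2}(\R)$ is self-adjoint and nonpositive, hence generates the heat (Gauss--Weierstrass) semigroup $e^{t\Delta}$, a $C_0$-semigroup of contractions on $L^2(dx)$; in particular $\Delta^*=\Delta$ and $e^{t\Delta^*}=e^{t\Delta}$. Since $D=C_c^\infty(\R)\subset W^{2,2}(\R)=\Dom(\Delta^*)$, the second requirement of (\ref{heat}b) is immediate. Next I identify the spectral data: because $W$ has reference measure $\lambda$, the representation~\rfb{spectrum} gives $E|Z_0(A)|^2=\int_A f\,dx$, so the spectral measure is $d\sigma_0=f\,dx$. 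By the Parseval identity of Example~\ref{white-noise} (in dimension one, $\int|g|^2\,dx=\tfrac{1}{2\pi}\int|\FT{g}|^2\,dx$) the $\F{\sigma_0}$-norm reads, in the frequency variable, $\|g\|_{\F{\sigma_0}}^2=\int|\FT{g}|^2\,(f+\tfrac{1}{2\pi})\,dx$, a weighted $L^2$ norm with weight bounded below by $\tfrac{1}{2\pi}>0$; under the Fourier transform the semigroup becomes the multiplier $\FT{e^{t\Delta}g}(x)=e^{-tx^2}\FT{g}(x)$.

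The heart of the argument, and the main obstacle, is $\Delta^*$-admissibility of $\F{\sigma_0}$. The contraction and strong-continuity estimates are easy in the Fourier picture: since $|e^{-tx^2}|\le1$ for $t\ge0$, multiplication by $e^{-tx^2}$ is a contraction on the weighted space, and $\|e^{t\Delta}g-g\|_{\F{\sigma_0}}^2=\int|e^{-tx^2}-1|^2|\FT{g}|^2\,(f+\tfrac{1}{2\pi})\,dx\to0$ as $t\to0^+$ by dominated convergence (the integrand is dominated by $|\FT{g}|^2(f+\tfrac1{2\pi})\in L^1$); together with the semigroup property and the contraction bound this yields a $C_0$-semigroup once invariance is established. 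The delicate point is precisely invariance, because $e^{t\Delta}$ does \emph{not} preserve $D$: it sends a compactly supported test function to a Schwartz function. I would resolve this by proving $\mathcal{S}(\R)\subset\F{\sigma_0}$: for $\psi\in\mathcal{S}(\R)$ the cutoffs $\phi_n=\chi(\cdot/n)\psi\in D$ (with $\chi\in C_c^\infty$, $\chi\equiv1$ near $0$) satisfy $(1-\chi(\cdot/n))\psi\to0$ in $\mathcal{S}$, hence $\FT{\phi_n}\to\FT{\psi}$ in $\mathcal{S}$; since $\sigma_0$ is tempered, $\int(1+x^2)^{-p}(f+\tfrac1{2\pi})\,dx<\infty$ for some $p$, and so $\|\phi_n-\psi\|_{\F{\sigma_0}}^2\le\big[\sup_x(1+x^2)^{p}|\FT{\phi_n-\psi}(x)|^2\big]\int(1+x^2)^{-p}(f+\tfrac1{2\pi})\,dx\to0$. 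Consequently $e^{t\Delta}\phi\in\mathcal{S}(\R)\subset\F{\sigma_0}$ for every $\phi\in D$, and by the density of $D$ in $\F{\sigma_0}$ together with the contractivity of the multiplier, $e^{t\Delta}$ maps $\F{\sigma_0}$ into itself.

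With (\ref{heat}a) and (\ref{heat}b) in hand I would invoke Theorem~\ref{existence} to get that $e^{t\Delta}\eta_0$ is a solution, Remark~\ref{strong-continuity-of-semigroup} (applied to the $C_0$-semigroup $e^{t\Delta}$ on $\F{\sigma_0}$ just constructed) to get that it is strongly continuous, and Theorem~\ref{uniqueness} for uniqueness. This is exactly the content summarised in Remark~\ref{one-solution}, whence $e^{t\Delta}\eta_0$ is the unique strongly continuous solution of~\rfb{heat2}, up to equivalence in finite-dimensional distributions.
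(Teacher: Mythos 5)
Your proof is correct and takes essentially the same route as the paper: identify $\sigma_0=f\,dx$, verify hypothesis (\ref{heat}b) by showing $e^{t\Delta}$ acts on $\F{\sigma_0}$ as the Fourier multiplier $e^{-tx^2}$ (contractivity from $|e^{-tx^2}|\leq 1$, strong continuity by dominated convergence), and conclude via Remark~\ref{one-solution}. The one difference is in your favour: the paper infers invariance of $\F{\sigma_0}$ directly from the norm estimate \rfb{delta-invariant}, tacitly identifying the completion of $D$ with the set of functions of finite norm, whereas you explicitly close this point by proving $\mathcal{S}(\R)\subset\F{\sigma_0}$ (smooth cutoffs plus temperedness of $\sigma_0$), so that $e^{t\Delta}D\subset\F{\sigma_0}$ and invariance of the whole space follows by density and contractivity.
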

      \begin{proof}
        First we notice that $\Delta^* = \Delta$, $D\subset\Dom(\Delta)$ and $e^{t\Delta}$ is a $C_0$-semigroup of contractions on $L^2(dx)$.
        Consider the space $\F{\sigma_0} = \F{f(x)dx}$. Observe that $e^{-tx^2} \leq 1$ for $t\geq 0$,
        and therefore
        \begin{equation}\label{delta-invariant}
          \begin{split}
            \|e^{t\Delta}\phi\|_{\F{\sigma_0}}^2 &= \int_\R|\FT{e^{t\Delta}\phi}|^2f(x)\,dx + \int_\R|e^{t\Delta}\phi|^2\,dx \\
            &= \int_\R|e^{-tx^2}|^2|\FT{\phi}|^2f(x)\,dx  + \frac{1}{2\pi}\int_\R|e^{-tx^2}|^2||\FT{\phi}|^2\,dx \\
            &\leq \int_\R|\FT{\phi}|^2f(x)\,dx + \int_\R|\phi|^2\,dx \leq \|\phi\|_{\F{\sigma_0}}^2.
          \end{split}
        \end{equation}
        Thus $\F{\sigma_0}$ is $e^{t\Delta}$-invariant subspace of $L^2(dx)$. A similar calculation leads us to
        \begin{equation}\label{delta-strong-cont}
          \begin{split}
            \|e^{t\Delta}\phi-\phi\|_{\F{\sigma_0}}^2 = \int_\R|\FT{e^{t\Delta}\phi-\phi}|^2f(x)\,dx + \int_\R|e^{t\Delta}\phi-\phi|^2\,dx \\
            = \int_\R|e^{-tx^2}-1|^2|\FT{\phi}|^2f(x)\,dx  + \int_\R|e^{t\Delta}\phi-\phi|^2\,dx.
          \end{split}
        \end{equation}
        Using the Lebesgue dominated convergence theorem we obtain
        \begin{equation*}
          \lim_{t\to 0^+}\int_\R|e^{-tx^2}-1|^2|\FT{\phi}|^2f(x)\,dx  + \int_\R|e^{t\Delta}\phi-\phi|^2\,dx = 0,
        \end{equation*}
        which shows that $e^{t\Delta}$ is a $C_0$-semigroup on $\F{\sigma_0}$. Therefore $\F{\sigma_0}$ is $\Delta$-admis\-sible (cf. (\ref{heat}b)).

        Then it follows from Remark~\ref{one-solution} that the family $e^{t\Delta}\eta_0$ is the unique strongly continuous solution to problem~\rfb{heat2},
        up to the equivalence in finite-dimensio\-nal distributions.
      \end{proof}
      \begin{theorem}\label{heat-scaling}
        Keeping the above notation, assume additionally
        \begin{equation}\label{limit-bound}
          \begin{split}
            &\lim_{T\to\infty} T^{-k/2} f(x/\sqrt{T})|x|^k = 1 \quad\text{for some $k\in[0,1)$},\\
            &T^{-k/2} f(x/\sqrt{T}) \leq \frac{c}{|x|^k}\quad\text{for every $x$ and $T$ and some constant $c\in\R$}
          \end{split}
        \end{equation}
        and suppose $u$ is a strongly continuous solution to problem \rfb{heat2}.
        Then for the rescaled family of random fields
        \begin{equation*}
          u^T(t,\phi) = T^{\frac{1-k}{4}} u(Tt)(\nu_{T^{-1/2}}\phi)
        \end{equation*}
        and the family
        \begin{equation*}
          w(t,\phi) =\int_\R\frac{e^{-tx^2}\FT{\phi}}{|x|^{k/2}}\,W(dx)
        \end{equation*}
        we have $\wlim_{T\to\infty} u^T(t) \= w(t)$.
      \end{theorem}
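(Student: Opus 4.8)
\emph{Plan.} I recognise this statement as a concrete instance of Theorem~\ref{scaling} and set out to verify its two hypotheses \rfb{measure-scaling} and \rfb{semigroup-scaling} for the heat semigroup. First, by Theorem~\ref{heat-solution} together with Remark~\ref{one-solution}, the given strongly continuous solution satisfies $u(Tt)\=e^{Tt\Delta}\eta_0$ in finite-dimensional distributions, so that $u^T(t,\phi)\=T^{(1-k)/4}(e^{Tt\Delta}\eta_0)(\nu_{T^{-1/2}}\phi)$; this is exactly the rescaled field of Theorem~\ref{scaling} with the parameter choices $\gamma=\tfrac{1-k}{4}$, $\alpha=\beta=-\tfrac12$ and $\P=\Delta$. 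It then remains to identify the limiting objects and check the hypotheses. I expect the limiting spectral measure to be $\sigma=|x|^{-k}\,dx$ (which is tempered since $k<1$), the limiting orthogonal random measure to be $Z(A)=\int_A|x|^{-k/2}\,W(dx)$, and the limit field to be $\eta(\phi)=\int_\R\FT{\phi}\,dZ$. A direct Fourier computation then gives $e^{t\Delta}\eta(\phi)=\int_\R e^{-tx^2}|x|^{-k/2}\FT{\phi}\,W(dx)=w(t,\phi)$, so the conclusion of Theorem~\ref{scaling} will read $\wlim_{T\to\infty}u^T(t)\=w(t)$, as desired.

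To verify \rfb{measure-scaling} I would use Proposition~\ref{ccw-measures}, reducing the weak convergence to $L^2$-convergence on bounded Borel sets. Writing $Z_0(A)=\int_A\sqrt f\,dW$ and exploiting the self-similarity of white noise (the random measure $B\mapsto T^{1/4}W(T^{-1/2}B)$ has the same covariance, hence the same law, as $W$), the change of variables $x=y/\sqrt T$ shows that $T^\gamma\nu_{T^{-1/2}}Z_0$ has the same finite-dimensional law as $\int_\cdot g_T\,dW$, where $g_T(y)^2=T^{2\gamma-1/2}f(y/\sqrt T)=T^{-k/2}f(y/\sqrt T)$, the last equality using $2\gamma-\tfrac12=-\tfrac{k}{2}$. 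By the first line of \rfb{limit-bound} we have $g_T(y)\to|y|^{-k/2}$ pointwise, and by the second line $g_T(y)^2\le c\,|y|^{-k}$, which is locally integrable precisely because $k<1$; dominated convergence then yields $\int_A(g_T-|\cdot|^{-k/2})^2\,dy\to0$, i.e.\ $L^2$-convergence of $\int_\cdot g_T\,dW$ to $Z$. Proposition~\ref{ccw-measures} gives $\wlim_{T\to\infty}\int_\cdot g_T\,dW\=Z$, and since this is a distributional statement, \rfb{measure-scaling} follows.

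Hypothesis \rfb{semigroup-scaling} I expect to be trivial by Remark~\ref{trivial-scaling}. Indeed, using $\FT{\nu_r\phi}(\xi)=\FT{\phi}(\xi/r)$ and $\FT{e^{t\Delta}\psi}(\xi)=e^{-t\xi^2}\FT{\psi}(\xi)$, one checks that $\FT{\eL{tT}(\nu_{T^{-1/2}}\phi)}(\xi)=e^{-tT\xi^2}\FT{\phi}(\sqrt T\xi)=\FT{\nu_{T^{-1/2}}(e^{t\Delta}\phi)}(\xi)$, so the two arguments in \rfb{semigroup-scaling} coincide and its integrand vanishes identically; this is just the self-similarity of the heat equation. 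The admissibility of $\F{\sigma}=\F{|x|^{-k}dx}$ under $\P^*=\Delta$ follows verbatim from the estimates \rfb{delta-invariant}--\rfb{delta-strong-cont} in the proof of Theorem~\ref{heat-solution}, now with $f$ replaced by $|x|^{-k}$. Having checked both hypotheses, Theorem~\ref{scaling} yields $\wlim_{T\to\infty}T^\gamma(e^{Tt\Delta}\eta_0)\nu_{T^{-1/2}}\=e^{t\Delta}\eta=w(t)$, and combining with the identity $u^T(t)\=T^\gamma(e^{Tt\Delta}\eta_0)\nu_{T^{-1/2}}$ from the first step completes the argument. The main obstacle is the verification of \rfb{measure-scaling}: one must treat the white-noise rescaling correctly and justify dominated convergence near the origin, where local integrability hinges on $k<1$; by contrast \rfb{semigroup-scaling} is essentially free, being a direct consequence of scale invariance.
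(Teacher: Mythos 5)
Your proposal is correct and follows essentially the same route as the paper: reduce to Theorem~\ref{scaling} with $\gamma=\tfrac{1-k}{4}$, $\alpha=\beta=-\tfrac12$, verify \rfb{semigroup-scaling} trivially via the self-similarity $e^{tT\Delta}(\nu_{T^{-1/2}}\phi)=\nu_{T^{-1/2}}(e^{t\Delta}\phi)$ (the paper checks this with the heat kernel, you in Fourier space --- an equivalent computation), and verify \rfb{measure-scaling} by the white-noise scaling $W(rA)\=\sqrt r\,W(A)$, the bounds \rfb{limit-bound}, dominated convergence, and Proposition~\ref{ccw-measures}. Your additional remarks --- the explicit identification $e^{t\Delta}\eta=w(t)$ and the $\Delta$-admissibility of $\F{|x|^{-k}dx}$ --- are points the paper leaves implicit, and they are verified correctly.
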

      \begin{proof}
        By Theorem~\ref{heat-solution}, we have $u(t) \= e^{t\LL}\eta_0$. Let us observe that for every $\phi\in D$
        \begin{align*}
          e^{tT\Delta}(&\nu_{T^{-1/2}} \phi)(x)
          =\int_\R \frac{1}{\sqrt{4\pi tT}}\exp\Big(\frac{|x-y|^2}{4Tt}\Big)\frac{1}{\sqrt{T}}\phi(y/\sqrt{T})\,dy \\
          &=\int_\R \frac{1}{\sqrt{4\pi tT}}\exp\Big(\frac{|x-\sqrt{T}y|^2}{4Tt}\Big)\phi(y)\,dy \\
          &=\frac{1}{\sqrt{T}}\int_\R \frac{1}{\sqrt{4\pi t}}\exp\Big(\frac{|x/\sqrt{T}-y|^2}{4t}\Big)\phi(y)\,dy
          = \nu_{T^{-1/2}}(e^{t\Delta}\phi)(x).
        \end{align*}
        Thus $e^{t\Delta}\eta_0$ trivially satisfies assumption~\rfb{semigroup-scaling} of Theorem~\ref{scaling} with $\alpha=\beta=-\frac{1}{2}$ (see Remark~\ref{trivial-scaling}).
        We also have
        \begin{multline*}
          T^{\frac{1-k}{4}}Z_0(T^{-1/2}A)
          = T^{\frac{1-k}{4}}\int_\R\mathbbm{1}_{T^{-1/2}A}(x)\sqrt{f(x)}\,W(dx)\\
          \= T^{\frac{1-k}{4}}\int_A\sqrt{f(x/\sqrt{T})}\,W(dx/\sqrt{T})
          \= \int_A T^{-k/4}\sqrt{f(x/\sqrt{T})}\,W(dx),
        \end{multline*}
        where we used the scaling property of the white noise $W(rA) = \sqrt{r}W(A)$.
        Denote $Z(A) = \int_A |x|^{-k/2}\,W(dx)$. Then
        \begin{multline*}
          E|T^{\frac{1-k}{4}}Z_0(T^{-1/2}A) - Z(A)|^2
          = E\Big|\int_{A} T^{-k/4}f(x/\sqrt{T}) - \frac{1}{|x|^{k/2}}\,W(dx)\big|^2
          \\= \int_{A}\frac{1}{|x|^k}|T^{-k/4}\sqrt{f(x/\sqrt{T})|x|^k} - 1|^2\,dx.
        \end{multline*}
        Hence, using assumptions~\rfb{limit-bound} and by the Lebesgue dominated convergence theorem, we obtain
        \begin{equation*}
          \lim_{T\to\infty}E\big|T^{\frac{1-k}{4}}Z_0(T^{-1/2}A) - Z(A)\big|^2 = 0,
        \end{equation*}
        which because of Proposition~\ref{ccw-measures} leads us to
        \begin{equation*}
          \wlim_{T\to\infty}T^{\frac{1-k}{4}}\nu_{T^{-1/2}}Z_0 \= Z.
        \end{equation*}
        Finally, in view of Theorem~\ref{scaling} we have
        \begin{equation}\label{heat-scaling-limit}
          \wlim_{T\to\infty} u^T(t,\,\cdot\,) \= \int_\R \frac{e^{-tx^2}\FT{\,\cdot\,}}{|x|^{k/2}}W(dx).
        \end{equation}
      \end{proof}
      \begin{rem}
        The family obtained as the limit in the expression \rfb{heat-scaling-limit} is a solution to the problem
        \begin{equation}\label{limit-heat}
          \left\{
          \begin{aligned}
            &\dt u = \Delta u\quad&\text{on $\T\times\R$}, \\
            &u(0) \= \xi_0\quad&\text{on $\R$},
          \end{aligned}
          \right.
        \end{equation}
        where $\xi_0(\phi) = \int_\R\FT{\phi} \frac{1}{|x|^{k/2}}W(dx)$.
        It is worth noting that if the function $f$ in formula~\rfb{spectrum} is integrable, then the initial condition $\eta_0$ may be represented as a regular random field (see Subsection~\ref{regular}).
        Meanwhile, since $|x|^{-k}$ is not integrable, the same is never true for the initial condition $\xi_0$ of the limit problem~\rfb{limit-heat}. However, for $t>0$, the
        semigroup solution of~\rfb{limit-heat} may again be represented as a regular random field, since $|x|^{-k}e^{-2tx^2}$ is always integrable for any $k\in[0,1)$.
      \end{rem}
    \end{subsection}
    \begin{subsection}{Pseudo-differential operator with self-similar noise}
      In this example we consider the following problem
      \begin{equation}\label{heat3}
        \left\{
        \begin{aligned}
          &\dt u = \P u\quad&\text{on $\T\times\R$}, \\
          &u(0) \= \xi_0\quad&\text{on $\R$}.
        \end{aligned}
        \right.
      \end{equation}
      Here the initial condition has the form
      \begin{equation*}
        \xi_0(\phi) = \int_\R\FT{\phi} \frac{1}{|x|^{k/2}}W(dx),\quad\text{$k\in[0,1)$},
      \end{equation*}
      where $W$ is the Gaussian white-noise.
      The operator $\P$ is pseudo-differential, namely
      \begin{equation*}
       \FT{\P\phi} = p(x)\FT{\phi}
      \end{equation*}
      for a given continuous function $p:\R\to \mathbb{C}$ such that $\re p \leq 0$.

      \begin{theorem}
        The family of random fields $e^{t\P}\xi_0$ is the unique strongly continuous solution to problem~\rfb{heat3},
        up to the equivalence in finite-dimensional distributions.
      \end{theorem}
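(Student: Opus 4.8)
The plan is to reduce the whole statement to Remark~\ref{one-solution}: once I verify that $\xi_0$ satisfies {\rm(\ref{heat}a)} and that $\P$ satisfies {\rm(\ref{heat}b)}, that remark immediately delivers that $e^{t\P}\xi_0$ is the unique strongly continuous solution to~\rfb{heat3}, up to equivalence in finite-dimensional distributions. Thus the argument is essentially a verification, closely transcribing the heat-equation computations~\rfb{delta-invariant}--\rfb{delta-strong-cont}. For {\rm(\ref{heat}a)}, I would note that the orthogonal random measure associated with $\xi_0(\phi)=\int_\R\FT{\phi}\,|x|^{-k/2}\,W(dx)$ is $Z_0(A)=\int_A|x|^{-k/2}\,W(dx)$, whose reference (spectral) measure is $\sigma_0(dx)=|x|^{-k}\,dx$. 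Since $|x|^{-k}$ is integrable near the origin and decays polynomially at infinity for $k\in[0,1)$, the measure $\sigma_0$ is tempered and $\xi_0$ is a genuine generalised broad-sense stationary random field.

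Next I would verify {\rm(\ref{heat}b)}, working on the Fourier side, where $\P$ acts as multiplication by $p$ and $e^{t\P}$ as multiplication by $e^{tp}$. Because $\re p\le0$ gives $|e^{tp(x)}|=e^{t\re p(x)}\le1$, multiplication by $e^{tp}$ is a contraction on $L^2(dx)$ for each $t\ge0$; the semigroup laws are transparent, and strong continuity at $t=0$ follows from the dominated convergence theorem, using the continuity of $p$ (so $e^{tp}\to1$ pointwise) with dominating function $4|\FT{\phi}|^2$. Hence $\P$ generates a $C_0$-semigroup of contractions on $L^2(dx)$. The adjoint $\P^*$ is again a Fourier multiplier, with symbol $\overline{p}$, and $D\subset\Dom(\P^*)$ because $\FT{\phi}$ is rapidly decreasing for $\phi\in D$.

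It then remains to check that $\F{\sigma_0}$ is $\P^*$-admissible (Definition~\ref{admissibility}). Paralleling~\rfb{delta-invariant}, for $\phi\in D$ I would compute
\[
\|e^{t\P^*}\phi\|_{\F{\sigma_0}}^2 = \int_\R |e^{t\overline{p}}|^2|\FT{\phi}|^2|x|^{-k}\,dx + \int_\R |e^{t\P^*}\phi|^2\,dx \le \|\phi\|_{\F{\sigma_0}}^2,
\]
using $|e^{t\overline{p}}|\le1$ on the first term and Plancherel together with $|e^{t\overline{p}}|\le1$ on the second, which gives invariance. Then, paralleling~\rfb{delta-strong-cont},
\[
\|e^{t\P^*}\phi-\phi\|_{\F{\sigma_0}}^2 = \int_\R |e^{t\overline{p}}-1|^2|\FT{\phi}|^2|x|^{-k}\,dx + \int_\R |e^{t\P^*}\phi-\phi|^2\,dx \longrightarrow 0
\]
as $t\to0^+$, the first integral by dominated convergence (dominating function $4|\FT{\phi}|^2|x|^{-k}$) and the second by strong continuity of $e^{t\P^*}$ on $L^2(dx)$. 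Therefore $e^{t\P^*}$ restricts to a $C_0$-semigroup on $\F{\sigma_0}$, so $\F{\sigma_0}$ is $\P^*$-admissible and {\rm(\ref{heat}b)} holds. Applying Remark~\ref{one-solution} then concludes the proof.

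I expect the main obstacle to be the identification of $\P^*$ and the verification that $D\subset\Dom(\P^*)$: the hypothesis that $p$ is merely continuous controls the modulus $|e^{tp}|$ uniformly only through $\re p\le0$, and one must be careful about the real- versus complex-valued conventions underlying the adjoint symbol $\overline{p}$, as well as about the growth of $p$, which is what guarantees $\overline{p}\,\FT{\phi}\in L^2(dx)$ for every $\phi\in D$. Everything else is a routine adaptation of the heat-equation argument, with $e^{-tx^2}$ replaced throughout by $e^{tp(x)}$.
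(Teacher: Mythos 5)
Your proposal is correct and takes essentially the same route as the paper's own proof: identify the spectral measure $|x|^{-k}dx$, use $|e^{tp(x)}|\leq 1$ (from $\re p\leq 0$) together with computations parallel to~\rfb{delta-invariant} and~\rfb{delta-strong-cont} to get $\P^*$-admissibility of $\F{|x|^{-k}dx}$, and then invoke Remark~\ref{one-solution}. You are in fact somewhat more explicit than the paper, which compresses all of this into ``entirely analogous to Theorem~\ref{heat-solution}''; in particular, the caveat you raise about the growth of $p$ being needed for $D\subset\Dom(\P^*)$ is a genuine point that the paper's proof silently glosses over.
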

      \begin{proof}
        The proof is entirely analogous to the proof of Theorem~\ref{heat-solution}.

        The spectral measure of $\xi_0$ is $|x|^{-k}dx$.
        Consider the space $\F{|x|^{-k}dx}$. Observe that for $t\geq 0$ we have $|e^{tp(x)}| \leq 1$ and then by a similar
        calculation as in~\rfb{delta-invariant} and~\rfb{delta-strong-cont}
        we obtain that $\F{|x|^{-k}dx}$ is $\P^*$-admissible (cf. (\ref{heat}b)).

        Then it follows from Remark~\ref{one-solution} that the family $e^{t\P}\xi_0$ is the unique strongly continuous solution to problem~\rfb{heat3},
        up to the equivalence in finite-dimensional distributions.
      \end{proof}
      \begin{theorem}
        Assume there exists a continuous real function $q:\R\to \R$ and a parameter $\alpha$ such that
        \begin{equation}\label{multiplier-scaling}
          \lim_{T\to\infty} T p(T^\alpha x) = q(x)\quad\text{for every $x\in\R$}
        \end{equation}
        and suppose $u$ is a strongly continuous solution to problem \rfb{heat3}.
        Then for the rescaled family of random fields
        \begin{equation*}
          u^T(t,\phi) = T^{\frac{\alpha}{2}(k-1)}u(Tt)(\nu_{T^{\alpha}} \phi)
        \end{equation*}
        and the family
        \begin{equation*}
          w(t) = \int_\R\frac{e^{-tq(x)}\FT{\phi}}{|x|^{k/2}}\,W(dx)
        \end{equation*}
        we have $\wlim_{T\to\infty} u^T(t) \= w(t)$.
      \end{theorem}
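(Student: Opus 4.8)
The plan is to deduce the result from the general scaling theorem, Theorem~\ref{scaling}. I would apply it to problem~\rfb{heat3} with $\LL=\P$, $\eta_0=\xi_0$, spectral measure $\sigma_0=|x|^{-k}\,dx$, exponents $\beta=\alpha$ and $\gamma=\tfrac{\alpha}{2}(k-1)$, and with the \emph{limit} operator taken to be the pseudo-differential operator $\Q$ with symbol $q$, i.e.\ $\FT{\Q\phi}=q\FT{\phi}$. By the preceding theorem $u(t)\=e^{t\P}\xi_0$, so it suffices to verify the two hypotheses of Theorem~\ref{scaling}. First I would check that $\Q$ is admissible in the required sense: since $q$ is continuous, real, and, being the limit of $T\re p(T^\alpha x)$ with $T>0$ and $\re p\le0$, nonpositive, the contraction bound $|e^{tq(x)}|\le1$ holds, and repeating the computation from the preceding theorem with this bound in place of $|e^{tp(x)}|\le1$ shows that $\F{|x|^{-k}\,dx}$ is $\Q^*$-admissible.

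For the measure-scaling hypothesis~\rfb{measure-scaling} I would use the self-similarity of the driving noise, so that it reduces to the trivial case of Remark~\ref{trivial-scaling}. Writing $Z_0(dx)=|x|^{-k/2}\,W(dx)$ and using the scaling $W(rA)\=\sqrt{r}\,W(A)$ of the white noise together with a change of variables, one finds $\nu_{T^\alpha}Z_0\=T^{\alpha(1-k)/2}Z_0$. Hence, with the choice $\gamma=\tfrac{\alpha}{2}(k-1)$, the two powers of $T$ cancel and $T^\gamma\nu_{T^\alpha}Z_0\=Z_0$ holds exactly; thus \rfb{measure-scaling} is satisfied with $Z=Z_0$, and the limiting field is $\eta=\xi_0$.

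The real content, and where I expect the main obstacle, is the semigroup-scaling hypothesis~\rfb{semigroup-scaling}. Passing to the Fourier side via $\FT{\nu_r\phi}(x)=\FT{\phi}(x/r)$, and noting that $\P^*$ has symbol $\overline p$ while $\Q^*$ has symbol $\overline q=q$, the expression $\FT{e^{tT\P^*}(\nu_{T^\alpha}\phi)-\nu_{T^\alpha}(e^{t\Q^*}\phi)}$ equals $\big(e^{tT\overline{p}(x)}-e^{tq(x/T^\alpha)}\big)\FT{\phi}(x/T^\alpha)$. Substituting $x\mapsto T^\alpha x$ in the integral against $d\sigma_0=|x|^{-k}\,dx$ produces a factor $T^{\alpha(1-k)}$, and since $2\gamma+\alpha(1-k)=0$ the prefactor $T^{2\gamma}$ is absorbed exactly, reducing \rfb{semigroup-scaling} to
\begin{equation*}
  \lim_{T\to\infty}\int_\R\big|e^{tT\overline{p}(T^\alpha x)}-e^{tq(x)}\big|^2\,|\FT{\phi}(x)|^2\,|x|^{-k}\,dx=0.
\end{equation*}
The integrand tends to $0$ pointwise by \rfb{multiplier-scaling} and continuity of $\exp$, because $T\overline{p}(T^\alpha x)=\overline{T p(T^\alpha x)}\to\overline{q(x)}=q(x)$. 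The delicate step is the passage to the limit under the integral sign: I would dominate the integrand by $4\,|\FT{\phi}(x)|^2\,|x|^{-k}$, using $|e^{tT\overline p(T^\alpha x)}|\le1$ and $|e^{tq(x)}|\le1$. This dominating function is independent of $T$ and integrable precisely because $k\in[0,1)$ keeps $|x|^{-k}$ locally integrable at the origin while $\FT{\phi}$ is rapidly decreasing at infinity, so the dominated convergence theorem applies.

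With both hypotheses in place, Theorem~\ref{scaling} yields $\wlim_{T\to\infty}T^\gamma(e^{Tt\P}\xi_0)\nu_{T^\alpha}\=e^{t\Q}\xi_0$. By the definition of $u^T$ the left-hand side is exactly $\wlim_{T\to\infty}u^T(t)$, while the right-hand side is $e^{t\Q}\xi_0(\phi)=\xi_0(e^{t\Q^*}\phi)$; since $q$ is real we have $q^*=q$, so $e^{t\Q^*}$ acts on the Fourier side as multiplication by $e^{tq}$, and $e^{t\Q}\xi_0$ is therefore the family $w(t)$ of the statement. This gives $\wlim_{T\to\infty}u^T(t)\=w(t)$.
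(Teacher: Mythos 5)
Your proposal is correct and follows essentially the same route as the paper: reduce to Theorem~\ref{scaling} with $\gamma=\frac{\alpha}{2}(k-1)$, verify hypothesis~\rfb{measure-scaling} trivially (Remark~\ref{trivial-scaling}) via the self-similarity $W(rA)\=\sqrt{r}\,W(A)$ of the white noise, and verify hypothesis~\rfb{semigroup-scaling} by the identical Fourier-side change of variables $x\mapsto T^\alpha x$ (which absorbs the factor $T^{2\gamma}$) followed by dominated convergence using $\re p\le 0$ and $q\le 0$. Your explicit verification of the $\Q^*$-admissibility of $\F{|x|^{-k}dx}$ and your derivation that $q\le 0$ as a limit of nonpositive quantities are details the paper leaves implicit, but they do not constitute a different approach.
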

      \begin{proof}
        Denote $Z_0(dx)= \frac{1}{|x|^{k/2}}W(dx)$ and $\sigma_0 = \frac{1}{|x|^k}dx$. Then
        \begin{align*}
          \nu_{T^{\alpha}}Z_0(A) =
          \int_\R\mathbbm{1}_{T^{\alpha}A}(x)\frac{1}{|x|^{k/2}}\,W(dx)
          &\= \int_\R\mathbbm{1}_{A}(x)\frac{1}{|T^{\alpha}x|^{k/2}}\,W(T^{\alpha}dx)\\
          &\= T^{-\frac{\alpha}{2}(k-1)}\int_\R\mathbbm{1}_{A}(x)\frac{1}{|x|^{k/2}}\,W(dx).
        \end{align*}
        Hence
        \begin{equation*}
          T^{\frac{\alpha}{2}(k-1)} \nu_{T^{\alpha}}Z_0(A)  \= Z_0(A)
        \end{equation*}
        and $Z_0$ satisfies assumption~\rfb{measure-scaling} of Theorem~\ref{scaling} with $\gamma = \frac{\alpha}{2}(k-1)$ (see Remark~\ref{trivial-scaling}).
        Define an operator $\Q$ by $\FT{\Q\phi}=q(x)\FT{\phi}$.
        Then we have
        \begin{multline*}
          \int_\R|\FT{e^{tT\P^*}(\nu_{T^\alpha}\phi) - \nu_{T^\alpha} (e^{t\Q^*}\phi)}|^2\,d\sigma_0\\
          = \int_\R \big|e^{tT\overline{p}(x)}\FT{\nu_{T^\alpha}\phi} -  e^{tq(T^{-\alpha} x)}\FT{\nu_{T^\alpha}\phi}\big|^2\,d\sigma_0 \\
          = \int_\R \big|e^{tT\overline{p}(T^\alpha x)} -  e^{tq(x)}\big|^2 \big|\FT{\phi}\big|^2\frac{T^\alpha}{|T^\alpha x|^k}\, dx\\
          = T^{-2\gamma}\int_\R \big|e^{tT\overline{p}(T^\alpha x)} -  e^{tq(x)}\big|^2 \big|\FT{\phi}\big|^2\frac{1}{|x|^k}\, dx
        \end{multline*}
        Therefore, because of~\rfb{multiplier-scaling} and since both $\re p$ and $q$ are non-positive, we may use Lebesgue dominated convergence theorem
        in order to obtain
        \begin{equation*}
        \lim_{T\to\infty}T^{2\gamma}\int_\R|\FT{e^{tT\P^*}(\nu_{T^\alpha}\phi) - \nu_{T^\alpha} (e^{t\Q^*}\phi)}|^2\,d\sigma_0 = 0.
        \end{equation*}
        This shows that assumption~\rfb{semigroup-scaling} is satisfied and it concludes the proof.
      \end{proof}
      \begin{rem}
        The family $w(t)$ obtained as the limit in the previous theorem is a solution to the problem
        \begin{equation*}
          \left\{
          \begin{aligned}
            &\dt u = \Q u\quad&\text{on $\T\times\R$}, \\
            &u(0) \= \xi_0\quad&\text{on $\R$}.
          \end{aligned}
          \right.
        \end{equation*}
        Moreover, it follows from assumption~\rfb{multiplier-scaling} that $\lim_{T\to\infty} T\overline{p}(T^\alpha x) = q(x)$ and therefore
        $q(x) = q(1)|x|^{-1/\alpha}$. Thus $\Q$ is in fact a fractional Laplace operator, namely for $s = -\frac{1}{2\alpha}$ and $c=-q(1)$ we have $\Q = -c(-\Delta)^s$.
      \end{rem}
    \end{subsection}
    \begin{subsection}{Heat equation with non-gaussian noise}
      In the last example we would like to present results similar to those obtained in~\cite{MR1632518}.
      Consider the following problem
      \begin{equation}\label{heat-lw}
       \left\{
        \begin{aligned}
         &\dt u = \Delta u\quad&\text{on $\T\times\R$}, \\
         &u(0) \= G(\eta_0)\quad&\text{on $\R$},
        \end{aligned}
       \right.
      \end{equation}
      where $\eta_0$ is a regular broad-sense stationary Gaussian random field with the orthogonal random measure $Z_0(A) =  \int_A\sqrt{f(x)}\,W(dx)$,
      and the function $G$ is such that $EG(\eta_0(x))^2$ is finite.
      Then the function $f$ is integrable, and the field $G(\eta_0)$ may be represented in the basis of Hermite polynomials (see~\cite[Thm.~1.1.2]{MR1344217})
      \begin{equation*}
        G(\eta_0(x)) \= \sum_{n=0}^\infty \frac{c_n}{n!} h_n(\eta_0(x)),
      \end{equation*}
      where
      \begin{equation*}
        h_n(\eta_0(x)) = \int_\R\cdots\int_\R e^{ix(y_1+\ldots+y_n)}\sqrt{f(y_1)\cdots f(y_n)}\,W(dy_1)\cdots W(dy_n)
      \end{equation*}
      and
      \begin{equation}\label{hermite}
        c_n = \frac{(-1)^n}{\sqrt{2\pi}}\int_\R G(x) \frac{d^n}{dx^n}e^{-x^2/2}\,dx
      \end{equation}
      (see~\cite[Prop.~1.1.4]{MR1344217}).
      This allows us to extend $G(\eta_0)$ to a generalised random field by the following expression (cf.~Subsection~\ref{regular})
      \begin{equation*}
        G(\eta_0)(\phi) = \sum_{n=0}^\infty \frac{c_n}{n!}\int_{R^n} \FT{\phi}(y_1+\ldots+y_n)\bigotimes_{i=1}^{n}\sqrt{f(y_i)}\,W(dy_i)
      \end{equation*}
      for every $\phi\in D$.
      \begin{prop}
      Each random field $h_n(\eta_0(x))$ is broad-sense stationary with an orthogonal random measure
      \begin{equation}\label{herm-rom}
        Z_n(A) = \int_{\R^n}\mathbbm{1}_{A}(y_1+\ldots+y_n)\bigotimes_{i=1}^{n}\sqrt{f(y_i)}\,W(dy_i).
      \end{equation}
      \end{prop}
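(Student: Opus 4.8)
The plan is to recognise $Z_n(A)$ as a multiple Wiener--Itô integral against the white noise $W$ and to read off all its properties from the isometry for such integrals. Writing $s(y) = y_1 + \cdots + y_n$ and
\[
  g_A(y_1,\ldots,y_n) = \mathbbm{1}_A(s(y))\prod_{i=1}^n\sqrt{f(y_i)},
\]
the kernel $g_A$ is symmetric in its arguments, and since $f$ is integrable we have $\int_{\R^n}|g_A|^2\,dy \leq \big(\int_\R f\,d\lambda\big)^n < \infty$, so $g_A \in L^2(\R^n)$ and the integral~\rfb{herm-rom}, i.e.\ $Z_n(A) = I_n(g_A)$, is a well-defined element of $L^2(\Omega;\C)$. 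I would define $Z_n$ on the $\pi$-system of bounded Borel sets, which suffices by Remark~\ref{pisystem} and mirrors the construction of $W$ itself in Example~\ref{white-noise}.

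Next I verify the three defining properties of an orthogonal random measure. The mean-zero property $EZ_n(A) = 0$ is immediate, since a multiple integral of order $n\geq 1$ has vanishing expectation. For orthogonality I invoke the isometry for multiple integrals against white noise: for symmetric kernels,
\[
  EZ_n(A_1)\overline{Z_n(A_2)} = n!\int_{\R^n} g_{A_1}\overline{g_{A_2}}\,dy = n!\int_{\R^n}\mathbbm{1}_{A_1\cap A_2}(s(y))\prod_{i=1}^n f(y_i)\,dy,
\]
using $\sqrt{f(y_i)}\,\overline{\sqrt{f(y_i)}} = f(y_i)$ and $\mathbbm{1}_{A_1}\mathbbm{1}_{A_2} = \mathbbm{1}_{A_1\cap A_2}$. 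This identifies the reference measure $\sigma_n$ as $n!$ times the $n$-fold convolution of $f\,d\lambda$ (equivalently, the push-forward of $n!\prod_i f(y_i)\,dy$ under $s$); it is finite because $\sigma_n(\R) = n!\big(\int_\R f\,d\lambda\big)^n < \infty$, which is consistent with the regular stationary setting of Subsection~\ref{regular}. Countable additivity over disjoint sets follows from the linearity of $g_A$ in $\mathbbm{1}_A$ together with the $L^2$-continuity encoded in the isometry.

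It remains to match the spectral representation. Starting from~\rfb{simple-functions}, for a simple function $\phi = \sum_i c_i\mathbbm{1}_{A_i}$ linearity gives $I_{Z_n}(\phi) = \sum_i c_i I_n(g_{A_i}) = I_n\big(\phi(s(y))\prod_i\sqrt{f(y_i)}\big)$; extending by the isometry to all $\phi \in L^2(\sigma_n;\C)$ and then taking $\phi(\xi) = e^{ix\xi}$ recovers precisely the defining formula for $h_n(\eta_0(x))$. Hence $h_n(\eta_0(x)) = \int_\R e^{ix\xi}\,Z_n(d\xi)$, and consequently
\[
  Eh_n(\eta_0(x))\overline{h_n(\eta_0(y))} = \int_\R e^{i(x-y)\xi}\,\sigma_n(d\xi),
\]
which depends on $x$ and $y$ only through $x-y$. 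By the characterisation of regular broad-sense stationary fields in Subsection~\ref{regular} (the analogue of Theorem~\ref{karhunen}), this shows that $h_n(\eta_0)$ is broad-sense stationary with orthogonal random measure $Z_n$ and spectral measure $\sigma_n$, as claimed.

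The main obstacle is the isometry used in the second step: the iterated integral~\rfb{herm-rom} must be interpreted as a genuine multiple Wiener--Itô integral whose second moment picks up only the fully off-diagonal pairing, the diagonal contributions vanishing because they carry zero Lebesgue measure. I would justify this by appealing to the standard theory of multiple integrals against (complex) white noise, e.g.\ \cite{MR1344217}, rather than reproving the isometry from scratch; some care with the complex covariance structure $EW(A)\overline{W(B)} = \lambda(A\cap B)$ of the noise is needed there, as is the observation that the symmetry of the kernel $g_A$ is exactly what lets one dispense with explicit symmetrisation.
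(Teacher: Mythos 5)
Your proposal is correct, but it is organised quite differently from the paper's own proof, and in fact it establishes more of what the proposition literally asserts. The paper's proof is a single direct computation: it evaluates $Eh_n(\eta_0(x_1+h))h_n(\eta_0(x_2+h))$ by the (unstated) isometry for multiple Wiener--It\^o integrals and observes that the translation factors $e^{ih(y_1+\ldots+y_n)}$ and $e^{-ih(y_1+\ldots+y_n)}$ cancel, so the covariance is translation invariant; the facts that $Z_n$ in \rfb{herm-rom} is an orthogonal random measure and that it is the one associated with $h_n(\eta_0)$ are left implicit in the representation $h_n(\eta_0(x))=\int_\R e^{ix\xi}\,Z_n(d\xi)$. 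You instead verify the axioms of an orthogonal random measure for $Z_n$ (mean zero, covariance structure, countable additivity in $L^2(\Omega;\C)$) and then match the spectral representation via \rfb{simple-functions}, deducing stationarity as a corollary; this covers both halves of the claim, at the price of invoking the full machinery of multiple Wiener--It\^o integrals, which you sensibly cite rather than reprove. The underlying tool --- the isometry --- is the same in both arguments. One point where your bookkeeping is actually more accurate than the paper's: with the conventions fixed by the Hermite expansion ($h_n$ is the $n$-th Hermite polynomial of $\eta_0(x)$, so $Eh_n(\eta_0(x))^2=n!$ when $\int_\R f\,d\lambda=1$, which is exactly what makes the later Bessel-inequality bound $\sum_n c_n^2/n!<\infty$ work), the isometry does carry the factor $n!$, so the reference measure of $Z_n$ is $n!\,f^{*n}\,d\lambda$ as you state; the paper's computation, and the remark following the proposition, silently drop this factor. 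The constant is irrelevant for the stationarity conclusion, so the paper's proof stands, but your version is the internally consistent one.
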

      \begin{proof}
      We proceed by a direct calculation. Denote $\overline{y} = y_1+\ldots+y_n$.
      \begin{multline*}
        Eh_n(\eta_0(x_1+h))h_n(\eta_0(x_2+h)) \\
        = E\Big(\int_{\R^n} e^{i(x_1+h)\overline{y}}\bigotimes_{i=1}^{n}\sqrt{f(y_i)}\,W(dy_i)
                \int_{\R^n} e^{i(x_2+h)\overline{y}}\bigotimes_{i=1}^{n}\sqrt{f(y_i)}\,W(dy_i)\Big)\\
        = \int_{\R^n} e^{ih\overline{y}}e^{ix_1\overline{y}}e^{-ih\overline{y}}e^{-ix_2\overline{y}}\bigotimes_{i=1}^{n}f(y_i)\,dy_i
        = Eh_n(\eta_0(x_1))h_n(\eta_0(x_2)).\qedhere
      \end{multline*}
      \end{proof}
      \begin{rem}
        Formula~\rfb{herm-rom} determines the spectral measure $\sigma_n$ of the random field $h_n(\eta_0(x))$, namely
        \begin{equation*}
          \sigma_n(A) = \int_{\R^n}\mathbbm{1}_{A}(y_1+\ldots+y_n)f(y_1)\cdots f(y_n)\,dy_1\cdots dy_n = \int_A f^{*n}(y)\,dy.
        \end{equation*}
      \end{rem}
      Now we are in a position to identify the solution to problem~\rfb{heat-lw}.
      \begin{theorem}
        The family of random fields $e^{t\Delta}G(\eta_0)(\phi) = G(\eta_0)(e^{t\Delta}\phi)$ is the unique strongly continuous solution to problem~\rfb{heat-lw},
        up to the equivalence in finite-dimensional distributions.
      \end{theorem}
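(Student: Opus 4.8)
The plan is to reduce everything to Remark~\ref{one-solution}, exactly as in the proof of Theorem~\ref{heat-solution}: it suffices to check that the initial condition $G(\eta_0)$ verifies {\rm(\ref{heat}a)} and that $\Delta$ together with the $H$-space attached to $G(\eta_0)$ verifies {\rm(\ref{heat}b)}. Since the operator $\Delta$ is unchanged from Subsection~\ref{heat-ex}, the only genuinely new point is to show that $G(\eta_0)$ is itself a legitimate zero-mean generalised broad-sense stationary random field and to determine its spectral measure; the admissibility of the corresponding $H$-space will then follow from the computation already carried out for the Gaussian case.

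First I would compute the covariance of $G(\eta_0)$ from its chaos decomposition $G(\eta_0)(\phi)=\sum_{n=0}^\infty\frac{c_n}{n!}\int_\R\FT{\phi}\,dZ_n$, where $Z_n$ is the orthogonal random measure \rfb{herm-rom}. Components belonging to Wiener chaoses of distinct orders are orthogonal in $L^2(\Omega)$, so all cross terms vanish and, using the reference measures $\sigma_n=f^{*n}\,dx$ identified after the preceding proposition,
\begin{equation*}
  E\,G(\eta_0)(\phi)\,G(\eta_0)(\psi)=\sum_{n=0}^\infty\frac{c_n^2}{(n!)^2}\int_\R\FT{\psi}\,\overline{\FT{\phi}}\,f^{*n}\,dx=\int_\R\FT{\psi}\,\overline{\FT{\phi}}\,d\sigma_G,
\end{equation*}
where $\sigma_G=\sum_{n=0}^\infty\frac{c_n^2}{(n!)^2}f^{*n}\,dx$. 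Each $\int_\R\FT{\phi}\,dZ_n$ is broad-sense stationary by the preceding proposition, and stationarity is preserved under the $L^2(\Omega)$-convergent sum, so $G(\eta_0)$ is broad-sense stationary with spectral measure $\sigma_G$; the zero-mean requirement of the theory corresponds to $c_0=0$, so the deterministic $n=0$ term drops out and the series runs over genuine $L^1$ densities $f^{*n}$, $n\geq1$.

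Next I would verify temperedness and continuity, i.e.~{\rm(\ref{heat}a)}. Because $f$ is integrable, $\int_\R f^{*n}\,dx=\big(\int_\R f\big)^n$, whence
\begin{equation*}
  \sigma_G(\R)=\sum_{n=1}^\infty\frac{c_n^2}{(n!)^2}\Big(\int_\R f\Big)^n=E\,G(\eta_0(x))^2<\infty
\end{equation*}
by hypothesis; thus $\sigma_G$ is a \emph{finite} measure, a fortiori tempered, and the same bound gives $E|G(\eta_0)(\phi)|^2=\int_\R|\FT{\phi}|^2\,d\sigma_G<\infty$ together with continuity of $\phi\mapsto G(\eta_0)(\phi)$ on $D$. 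For {\rm(\ref{heat}b)} I would repeat the argument of Theorem~\ref{heat-solution} verbatim: $\Delta^*=\Delta$, $D\subset\Dom(\Delta)$, and $e^{t\Delta}$ acts in the Fourier variable by multiplication by $e^{-tx^2}$ with $|e^{-tx^2}|\leq1$, so the estimates \rfb{delta-invariant} and \rfb{delta-strong-cont} — which use only this pointwise bound and the dominated convergence theorem — hold with $\sigma_0$ replaced by $\sigma_G$. Hence $\F{\sigma_G}$ is an $e^{t\Delta}$-invariant subspace on which $e^{t\Delta}$ restricts to a $C_0$-semigroup, i.e.~it is $\Delta$-admissible, and Remark~\ref{one-solution} yields the claim.

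The main obstacle I anticipate lies behind the first two steps: justifying that the Hermite series converges in $L^2(\Omega)$ and defines a \emph{continuous} operator $D\to L^2(\Omega)$, and that the term-by-term evaluation of the covariance (interchanging the expectation, the infinite sum and the Fourier integral) is legitimate. The finiteness of $\sigma_G$ is precisely the estimate that makes all these interchanges uniform over bounded families of test functions, so once the identity $\sigma_G(\R)=E\,G(\eta_0(x))^2$ is established, everything downstream is a transcription of the Gaussian argument of Theorem~\ref{heat-solution}.
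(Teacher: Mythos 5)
Your proposal is correct and takes essentially the same route as the paper: the paper's own proof consists of exactly two sentences, observing that finiteness of $EG(\eta_0(x))^2$ makes the spectral measure of the initial condition finite and then proceeding ``exactly as in the proof of Theorem~\ref{heat-solution}.'' Your chaos-decomposition computation of $\sigma_G$ and the identity $\sigma_G(\R)=EG(\eta_0(x))^2$ merely fill in the details the paper leaves implicit.
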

      \begin{proof}
        Because $EG(\eta_0(x))^2$ is finite, so is the spectral measure of the initial condition.
        Then we proceed exactly as in the proof of Theorem~\ref{heat-solution}.
      \end{proof}

      \begin{theorem}\label{heat-lw-solutions}
        Let $u(t) \= e^{t\Delta}G(\eta_0)$ be a solution to~\rfb{heat-lw}. Denote $R(|x|) = E\eta_0(0)\eta_0(x)$ and $m = \min\{n: c_n\neq0\}$,
        where $c_n$ are given by~\rfb{hermite}. Suppose the following
        \begin{enumerate}
          \item $R: [0,\infty)\to [0,1]$
          \item $R$ is non-increasing.
          \item $\lim_{x\to\infty}R(x)|x|^{1-k} = 1$ for some $k\in [1-\frac{1}{m},1)$.
        \end{enumerate}
        Then for the rescaled family of random fields
        \begin{equation*}
          u^T(t,\phi) = T^{\frac{m}{4}(1-k)} u(Tt)(\nu_{T^{-1/2}}\phi)
        \end{equation*}
        and the family
        \begin{equation*}
          w(t,\phi) = C_k^{m/2}\frac{c_m}{m!}\int_{\R^m} \frac{\FT{\phi}e^{-t(y_1+\ldots+y_m)^2}}{|y_1\cdots y_m|^{k/2}}\bigotimes_{i=1}^{m}W(dy_i)
        \end{equation*}
        we have $\wlim_{T\to\infty} u^T(t) \= w(t)$.
      \end{theorem}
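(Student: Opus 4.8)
The plan is to expand the solution into Hermite (Wiener--It\^o) chaoses, rescale each chaos separately, and show that the chaos of order $m$ converges to $w(t)$ while the higher-order chaoses are asymptotically negligible; the orthogonality of distinct chaoses then lets me treat the two parts independently. Since $u(t)\=e^{t\Delta}G(\eta_0)$ and $\FT{e^{t\Delta}\phi}=e^{-t(\cdot)^2}\FT{\phi}$, I first write (as an orthogonal series convergent in $L^2(\Omega)$)
\[
u(t)(\phi)=\sum_{n\ge m}\frac{c_n}{n!}\int_{\R^n} e^{-t(y_1+\cdots+y_n)^2}\FT{\phi}(y_1+\cdots+y_n)\bigotimes_{i=1}^n\sqrt{f(y_i)}\,W(dy_i).
\]
Writing $u^T_n(t)(\phi)$ for the order-$n$ component of $u^T(t)(\phi)=T^{\frac m4(1-k)}u(Tt)(\nu_{T^{-1/2}}\phi)$, the identity $\FT{\nu_{T^{-1/2}}\phi}(\xi)=\FT{\phi}(\sqrt T\,\xi)$, the substitution $y_i=x_i/\sqrt T$ and the white-noise scaling $W(rA)\=\sqrt r\,W(A)$ give
\[
u^T_n(t)(\phi)\=\frac{c_n}{n!}\,T^{\frac m4(1-k)-\frac n4}\int_{\R^n} e^{-t(x_1+\cdots+x_n)^2}\FT{\phi}(x_1+\cdots+x_n)\bigotimes_{i=1}^n\sqrt{f(x_i/\sqrt T)}\,W(dx_i).
\]

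Next I would pass from the hypotheses on $R$ to the behaviour of the spectral density $f$ near the origin. Because $R$ is bounded, non-increasing and $R(x)|x|^{1-k}\to1$, a Karamata (monotone-density) Tauberian theorem yields $f(\xi)\sim C_k|\xi|^{-k}$ as $\xi\to0$, with $C_k$ fixed by this normalisation, and---crucially---monotonicity upgrades this into a \emph{uniform} Potter-type bound $T^{-k/2}f(x/\sqrt T)\le h(x)$ with $h(x)\sim C_k|x|^{-k}$. For the leading chaos $n=m$ the prefactor is $T^{-mk/4}$, and $\prod_{i}T^{-k/4}\sqrt{f(x_i/\sqrt T)}\to C_k^{m/2}\prod_i|x_i|^{-k/2}$ pointwise, so the kernel of $u^T_m(t)(\phi)$ converges pointwise to the kernel of $w(t)(\phi)$.

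The limiting kernel $C_k^{m/2}e^{-t(x_1+\cdots+x_m)^2}\FT{\phi}(x_1+\cdots+x_m)\prod_i|x_i|^{-k/2}$ lies in $L^2(\R^m)$ precisely because $1-\tfrac1m\le k<1$: the factors $|x_i|^{-k}$ are locally integrable for $k<1$, and after integrating out the constraint $x_1+\cdots+x_m=s$ the resulting homogeneity $|s|^{m-1-mk}$ is integrable near $0$ and is tamed at infinity by the Gaussian weight exactly when $m(1-k)\le1$. Using the dominating function $h$ and dominated convergence, the kernels converge in $L^2(\R^m)$, and by the Wiener--It\^o isometry $u^T_m(t)(\phi)\to w(t)(\phi)$ in $L^2(\Omega)$. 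For the remainder I switch to the time domain: from $u^T(t)(\phi)=T^{\frac m4(1-k)}\int_\R G(\eta_0(\sqrt T z))(e^{t\Delta}\phi)(z)\,dz$, Mehler's formula $E[G(\eta_0(x))G(\eta_0(x'))]=\sum_{n\ge m}\frac{c_n^2}{n!}R(|x-x'|)^n$ (for the standardised field, $R(0)=1$) together with $0\le R\le1$ (so $R^n\le R^{m+1}$ for $n>m$) bounds the remainder variance by
\[
\mathrm{Var}\Big(\textstyle\sum_{n>m}u^T_n(t)(\phi)\Big)\le C\,T^{\frac m2(1-k)}\int_\R\!\int_\R R\big(\sqrt T\,|z-z'|\big)^{m+1}\,|\psi(z)\psi(z')|\,dz\,dz',\quad \psi=e^{t\Delta}\phi,
\]
and a change of variables $v=\sqrt T(z-z')$ combined with the regular variation of $R$ shows the right-hand side is a negative power of $T$, hence tends to $0$. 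Therefore $u^T(t)(\phi)\to w(t)(\phi)$ in $L^2(\Omega)$; running the argument for a vector $(\phi^1,\dots,\phi^l)$, whose leading parts are order-$m$ Wiener--It\^o integrals with $L^2(\R^m)$-convergent kernels and whose remainders vanish in $L^2(\Omega)$, yields joint convergence in $L^2(\Omega)$ and hence weak convergence of the finite-dimensional distributions, i.e.\ $\wlim_{T\to\infty}u^T(t)\=w(t)$.

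The main obstacle is the uniform-in-$T$ control needed in two places: the dominating bound on $T^{-k/2}f(x/\sqrt T)$ for the $L^2(\R^m)$-convergence of the leading kernel, and the scaling estimate of the remainder tail. Both rest on the monotonicity hypothesis, which is exactly what converts the merely pointwise regular variation of $R$ into the uniform Potter-type bounds that legitimise the interchange of the limit with the $L^2$-integral and with the infinite chaos sum; the admissible range $k\in[1-\tfrac1m,1)$ enters as the precise condition for the limiting $m$-th chaos to be a well-defined (square-integrable) Wiener--It\^o integral.
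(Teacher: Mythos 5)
Your proposal is correct in outline and rests on the same core strategy as the paper --- expand in Hermite chaoses, show that the $m$-th chaos converges and that the higher-order ones vanish --- but both key steps are implemented by genuinely different means. The paper never leaves its abstract framework: it recycles the heat-kernel identity $e^{tT\Delta}\nu_{T^{-1/2}}=\nu_{T^{-1/2}}e^{t\Delta}$ from Theorem~\ref{heat-scaling}, so that hypothesis \rfb{semigroup-scaling} of Theorem~\ref{scaling} holds trivially, and then only verifies hypothesis \rfb{measure-scaling}, i.e.\ $L^2$-convergence of the rescaled measures $T^{\gamma}Z_n(T^{-1/2}A)$ on bounded sets (so that Proposition~\ref{ccw-measures} applies); the tail $\sum_{n>m}$ is killed on the \emph{spectral} side by Lemma~\ref{convolutions}, which uses monotonicity of $R$ to prove $f^{*(n+1)}\le f^{*n}$ near the origin, whence every tail term is dominated by the $(m{+}1)$-st. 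You instead work directly with the Wiener--It\^o integrals evaluated on test functions, prove $L^2(\R^m)$-convergence of kernels for the $m$-th chaos, and control the tail in \emph{physical} space via Mehler's formula and $R^n\le R^{m+1}$ --- the classical Dobrushin--Major/Taqqu route, essentially that of \cite{MR1632518}. Your route gives $L^2(\Omega)$-convergence of $u^T(t,\phi)$, which is stronger than the weak convergence claimed, at the price of needing a uniform dominating bound $T^{-k/2}f(x/\sqrt{T})\le h(x)$; the paper's route keeps the limit identified inside its semigroup framework and replaces that domination by the monotone-convolution lemma (though it too silently needs a domination argument for the $n=m$ term --- in the Gaussian example this was imposed as the explicit hypothesis \rfb{limit-bound}).

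Two caveats, both of which you share with the paper rather than introduce. First, your assertion that monotonicity of $R$ ``upgrades'' $f(\xi)\sim C_k|\xi|^{-k}$ to the uniform Potter-type bound is the weakest link: monotonicity of $R$ does not make $f$ monotone, and the bound is needed for \emph{all} $x$ (the individual frequencies $x_i$ in the kernel are unbounded even when their sum is not), so this step deserves a real proof or an added hypothesis. Second, at the boundary $k=1-\frac1m$ both tail estimates give $O(1)$ rather than $o(1)$ (your exponent $\frac m2(1-k)-\frac12$ is then exactly $0$, and the paper's quantity $E|T^{\gamma}Z_{m+1}(T^{-1/2}A)|^2$ tends to a positive constant); moreover the $m$-fold convolution of $|\cdot|^{-k}$ diverges there, so the limiting kernel is \emph{not} square-integrable --- contrary to your claim that $m(1-k)\le1$ is ``exactly'' the right condition. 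The theorem really requires $k\in(1-\frac1m,1)$; this defect is already present in the paper's statement, so it is not a gap of your argument relative to the paper's, but your parenthetical justification of the boundary case is wrong and should be dropped.
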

      \begin{proof}
        Since we already established the scaling properties of the Laplace operator in Theorem~\ref{heat-scaling},
        we need only to study the behaviour of the rescaled orthogonal random measure.

        Denote $\gamma = \frac{m}{4}(1-k)$. Similarly to a calculation in the proof of Theorem~\ref{heat-scaling}, we have
        \begin{equation*}
          T^{\gamma}Z_n(T^{-1/2}A) =
          T^{\gamma-n/4}\int_{\R^n}\mathbbm{1}_{A}(y_1+\ldots+y_n)\bigotimes_{i=1}^{n}\sqrt{f(y_i/\sqrt{T})}\,W(dy_i).
        \end{equation*}
        Since
        \begin{equation*}
          R(|x|) = \int_\R e^{ix\xi} f(\xi)\,d\xi = 2\int_0^\infty \cos(x\xi)f(\xi)\,d\xi,
        \end{equation*}
        then because of a Tauberian theorem (see \cite[Thm.~4.10.3]{MR1015093}) we have
        \begin{equation*}
          \lim_{x\to 0} f(x)|x|^k = C_k,
        \end{equation*}
        for a constant $C_k$ (see~\cite{MR1015093} or~\cite{MR1632518} for details). Thus
        \begin{equation*}
          \lim_{x\to 0} T^{-k/2}f(x/\sqrt{T})|x|^k = C_k\quad\text{and}\quad
          \lim_{T\to\infty} T^{-k/2}f(x/\sqrt{T}) = C_k\frac{1}{|x|^k}
        \end{equation*}
        Hence for $n=m$ we obtain
        \begin{equation*}
          \lim_{T\to\infty}E\Big(T^{\gamma}Z_m(T^{-1/2}A) - C_k^{m/2}\int_{\R^m}\mathbbm{1}_{A}(y_1+\ldots+y_m)\bigotimes_{i=1}^{m}|y_i|^{-k/2}W(dy_i)\Big)^2 = 0.
        \end{equation*}
        We also have $f^{*n}(\xi) = 2\int_0^\infty\cos(\xi x)R(x)^n\,dx$ and therefore by using Lemma~\ref{convolutions} from Appendix
        we know that there exists $\delta$ such that for every $\xi\in[0,\delta)$ and every~$n$
        \begin{equation*}
          f^{*(n+1)}(\xi)< f^{*n}(\xi)
        \end{equation*}
        Therefore if $A\subset(-\delta,\delta)$ and for $n>m$
        \begin{equation*}
          E|Z_{n+1}(A)|^2 = \int_A f^{*(n+1)}(y)\,dy \leq \int_A f^{*n}(y)\,dy = E|Z_{n}(A)|^2
        \end{equation*}
        or, for every bounded $A$ and sufficiently large $T$,
        \begin{equation*}
          \frac{E|T^{\gamma}Z_{n+1}(T^{-1/2}A)|^2}{(n+1)!} \leq \frac{E|T^{\gamma}Z_{n}(T^{-1/2}A)|^2}{n!}.
        \end{equation*}
        This gives us the following estimate
        \begin{multline*}
          E\Big|\sum_{n>m}\frac{c_n}{n!} T^{\gamma}Z_n(T^{-1/2}A)\Big|^2 = \sum_{n>m}\frac{c_n^2}{(n!)^2} E|T^{\gamma}Z_n(T^{-1/2}A)|^2\\
          \leq \frac{E|T^{\gamma}Z_{m+1}(T^{-1/2}A)|^2}{(m+1)!} \sum_{n>m}\frac{c_n^2}{n!}.
        \end{multline*}
        The series $\sum_{n>m}\frac{1}{n!}c_n^2$ is convergent because of the Bessel inequality.
        Since
        \begin{equation*}
          \lim_{T\to\infty} \frac{E|T^{\gamma}Z_{m+1}(T^{-1/2}A)|^2}{(m+1)!} = 0
        \end{equation*}
        we obtain
        \begin{equation*}
          \lim_{T\to\infty} E\Big|\sum_{n>m}\frac{c_n}{n!} T^{\gamma}Z_n(T^{-1/2}A)\Big|^2 = 0.
        \end{equation*}
        Finally, because of Proposition~\ref{ccw-measures} we have
        \begin{multline*}
          \wlim_{T\to\infty} T^{1/4} u(Tt)(\nu_{T^{-1/2}}\phi)\\
          \= C_k^{m/2}\frac{c_m}{m!}\int_{\R^m} \frac{\FT{\phi}(y_1+\ldots+y_m)e^{-t(y_1+\ldots+y_m)}}{|y_1\cdots y_m|^{k/2}}\bigotimes_{i=1}^{m}W(dy_i).
          \qedhere
        \end{multline*}
      \end{proof}
    \end{subsection}
  \end{section}
  \begin{section}{Appendix}
    Here we present auxillary results required in the proof of Theorem~\ref{uniqueness} and a simple lemma we need to estimate an expression in the proof of
    Theorem~\ref{heat-lw-solutions}
    \begin{lemma}\label{torunczyk}
      Let $X$ be a locally convex separable metric linear space and $K$ be a compact set. If $Y$ is a dense linear subspace of $X$, then
      $C(K,Y)$ is dense in $C(K,X)$ (both with the uniform convergence topology).
    \end{lemma}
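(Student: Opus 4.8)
The plan is to argue by a partition-of-unity construction that lifts the density of $Y$ in $X$ to the level of continuous functions, using local convexity to prevent the approximation error from accumulating across the patches. Since $X$ is a metrizable topological vector space, I would first fix a translation-invariant metric $d$ inducing its topology, so that approximating $g$ against $f$ uniformly reduces to controlling $d(f(s)-g(s),0)$ for all $s\in K$. Fix $f\in C(K,X)$ and $\varepsilon>0$. By continuity of addition at the origin together with local convexity, I would choose a convex symmetric neighbourhood $V$ of $0$ with $V+V\subset\{x:d(x,0)<\varepsilon\}$.

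Next I would build the approximant locally and glue. For each $t\in K$, density of $Y$ gives $y_t\in Y$ with $f(t)-y_t\in V$, and continuity of $f$ gives an open neighbourhood $U_t$ of $t$ in $K$ with $f(s)-f(t)\in V$ for $s\in U_t$; hence $f(s)-y_t\in V+V$ whenever $s\in U_t$. Since $K$ is compact — and, as a compact metric space, normal — I would extract a finite subcover $U_{t_1},\dots,U_{t_n}$ and take a continuous partition of unity $\rho_1,\dots,\rho_n$ subordinate to it, with $\supp\rho_i\subset U_{t_i}$ and $\sum_i\rho_i\equiv 1$. The candidate is $g(s)=\sum_{i=1}^n\rho_i(s)\,y_{t_i}$, which lies in $C(K,Y)$ because $Y$ is a linear subspace and the sum is finite.

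The final step is the uniform estimate. Writing $f(s)-g(s)=\sum_i\rho_i(s)\,(f(s)-y_{t_i})$ via $\sum_i\rho_i(s)=1$, I would note that only indices with $\rho_i(s)>0$ contribute, and for those $s\in\supp\rho_i\subset U_{t_i}$, so $f(s)-y_{t_i}\in V+V$. Thus $f(s)-g(s)$ is a convex combination of elements of $V+V$; since $V$ is convex the set $V+V$ is convex, whence $f(s)-g(s)\in V+V$ and $d(f(s),g(s))<\varepsilon$ for every $s\in K$. I expect the crux of the argument to be precisely this convexity bookkeeping: a naive triangle-inequality bound would let the errors from overlapping patches add up, and the role of choosing $V$ convex — available only because $X$ is locally convex — is exactly that a convex combination of near-approximants remains a near-approximant. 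The remaining ingredients, namely a translation-invariant metric and a subordinate partition of unity, are standard for metrizable vector spaces and for compact (hence normal) domains; separability of $X$ is not needed beyond what metrizability already supplies.
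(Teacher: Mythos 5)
Your proof is correct, and it takes a genuinely different and more elementary route than the paper. The paper gets the lemma as a quick corollary of infinite-dimensional topology: it invokes results of Banakh--Radul--Zarichnyi to conclude that $(X,Y)$ is an {\sf ANR}-pair and that $Y$ is homotopy dense in $X$, i.e.\ there is a homotopy $h$ on $[0,1]\times X$ with $h(0)=\id_X$ and $h(t)(x)\in Y$ for $t>0$; composing $h(t)$ with a given $g\in C(K,X)$ then produces the approximating functions at once. Your argument instead builds the approximant by hand: cover $K$ by finitely many patches on which $f$ oscillates within a convex neighbourhood $V$, pick a $Y$-approximant on each patch, and glue with a partition of unity, with local convexity ensuring that the overlap errors form a convex combination inside $V+V$ rather than accumulating. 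The trade-offs are real: your proof is self-contained, uses only the metrizability of $X$, normality of $K$, and local convexity, and --- as you correctly observe --- makes the separability hypothesis superfluous, whereas the paper's separability assumption is what feeds the {\sf ANR} machinery. What the paper's route buys in exchange is brevity and a stronger structural conclusion (a single homotopy witnessing density, independent of $g$ and of $K$), at the cost of citing deep external theorems. One small point to keep in mind: the lemma's phrase ``compact set'' should be read as a compact metric (or at least Hausdorff) space, exactly as you assumed, since normality is what licenses the subordinate partition of unity; in the paper's application $K=[0,T]$, so this is harmless.
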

    \begin{proof}
      It follows from \cite[Thm. 4.2.14]{MR1851014} that $(X,Y)$ is an {\sf ANR}-pair in the sense of~\cite[p. 266]{MR1851014}. Then by \cite[Thm. 4.1.6]{MR1851014}
      there exists a homotopy $h$ on $[0,1]\times X$ such that $h(0)=\id_X$ and $h(t)(x)\in Y$ for every $t>0$ and $x\in X$ ($Y$ is homotopy dense in $X$).

      Let us take a function $g\in C(K,X)$ and put $u_t(k) = h(t)(g(k))$. Then we have $u_0 = g$ and for every $t>0$,  $u_t\in C(K,Y)$, and
      \begin{equation*}
        \lim_{t\to 0}\sup \|u_t - u_0\|_X =0.\qedhere
      \end{equation*}
    \end{proof}
    \begin{theorem}\label{separable-density}
      Assume {\rm(\ref{heat}a)} and {\rm(\ref{heat}b)}. The operator
      \begin{equation*}
        \B : \DT \to \CTL
      \end{equation*}
      has a dense image.
    \end{theorem}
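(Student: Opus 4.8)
The plan is to invert $\B$ up to an arbitrarily small error, that is, to solve $\B\phi=g$ approximately within $\DT$ for every $g\in\CTL$. The backbone is the identity
\[
  \eL{t}\B\phi(t)=\dt\big(\eL{t}\phi(t)\big),
\]
already established in the proof of Theorem~\ref{existence}. If $g$ is supported in $[0,T]$, then any $\phi$ with $\B\phi=g$ and $\phi(t)=0$ for $t\ge T$ must satisfy $\eL{t}\phi(t)=-\int_t^T\eL{s}g(s)\,ds$, which singles out the candidate
\[
  \chi_g(t)=-\int_t^T \eL{(s-t)}g(s)\,ds .
\]
Since $s-t\ge 0$ on the range of integration, $\chi_g$ is a well-defined, continuous, compactly supported map into $\F{\sigma_0}$, i.e.\ $\chi_g\in\CTL$; the difficulty is that $\chi_g$ need not lie in $\DT$.

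First I would shrink the class of right-hand sides. Using Lemma~\ref{torunczyk} with $X=\F{\sigma_0}$ and the dense subspace $Y=D$, and then a partition of unity together with a mollification in the time variable, every $g\in\CTL$ is approximated in $\CTL$ by finite sums $\sum_j\theta_j(t)c_j$ with $\theta_j\in C_c^\infty(\T)$ and $c_j\in D$; by linearity it suffices to treat a single term $g(t)=\theta(t)c$ with $c\in D\subset\Dom(\LL^*)$. For such $g$ the candidate $\chi_g(t)=-\int_t^T\theta(s)\eL{(s-t)}c\,ds$ takes values in $\Dom(\LL^*)$ (the integrand does, and $\LL^*$ is closed), is continuously differentiable in $t$, and—differentiating under the integral, using $\tfrac{d}{ds}\eL{(s-t)}c=\eL{(s-t)}\LL^*c$ and $\theta(T)=0$—one gets $\B\chi_g=g$ \emph{exactly}, with both $t\mapsto\chi_g(t)$ and $t\mapsto\LL^*\chi_g(t)$ continuous and compactly supported.

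It then remains to approximate $\chi_g$ by genuine test functions $\phi_n\in\DT$ so that $\B\phi_n\to g$. Because $\B$ carries the unbounded operator $\LL^*$, convergence in $\CTL$ alone is insufficient: I would need $\phi_n\to\chi_g$ \emph{together with} $\LL^*\phi_n\to\LL^*\chi_g$ and $\phi_n'\to\chi_g'$, all uniformly in $t$ in the norm of $\F{\sigma_0}$. To secure this I would invoke Lemma~\ref{torunczyk} a second time, now with $X$ the domain of the generator of the restricted semigroup $\eL{t}$ on $\F{\sigma_0}$ equipped with its graph norm (a separable Hilbert space by Remark~\ref{separability}) and $Y=D$, combined once more with a mollification in $t$. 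This yields finite sums $\sum_i b_i(t)d_i$ with $b_i\in C_c^\infty(\T)$ and $d_i\in D$—hence elements of $\DT$—converging to $\chi_g$ in the graph norm uniformly in $t$, whence $\B\phi_n=\phi_n'+\LL^*\phi_n\to\chi_g'+\LL^*\chi_g=g$ in $\CTL$.

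The main obstacle is the density hypothesis required for this second application of Lemma~\ref{torunczyk}: that $D$ be dense in the graph-norm domain of the generator of $\eL{t}$ on $\F{\sigma_0}$, i.e.\ that $D$ be a \emph{core} for that generator. This is delicate precisely because $D$ need not be invariant under $\eL{t}$ (the semigroup may delocalise compact supports, as in the pseudo-differential example), so the standard invariance criterion for cores does not apply. I expect to have to extract this density from the admissibility of $\F{\sigma_0}$ in hypothesis~(\ref{heat}b)—for instance by verifying that $(\lambda-\LL^*)D$ is dense in $\F{\sigma_0}$ for some $\lambda$ in the resolvent set of the restricted generator—and this is the step on which I would spend the most effort; the two reductions above are routine once the core property is in hand.
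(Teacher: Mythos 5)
Your strategy coincides with the paper's own proof: the paper likewise inverts $\B$ by exhibiting a mild solution of the time-reversed inhomogeneous problem (the formula printed there, $u(t)=\int_0^t e^{(T-s)\LL^*}g(s)\,ds$, appears garbled---as written it does not satisfy $\B u=g$---but the object described in words, the reversed-time Duhamel integral, is exactly your $\chi_g$), and it then approximates $u$ uniformly in $\F{\sigma_0}$ by elements of $C_c^\infty(\Tb\times\X)$ via Lemma~\ref{torunczyk}, just as in your first reduction. The two arguments part ways at the last step. The paper concludes with ``$\lim_{n\to\infty}\B u_n=\B u=g$'' for a sequence satisfying only $u_n\to u$ in $C(\Tb,\F{\sigma_0})$, i.e.\ it treats $\B$ as if it were continuous for the sup norm; you correctly observe that $\B$ contains the unbounded operator $\LL^*$, so that this convergence has to be engineered rather than inferred, and that what is really needed is approximation of $\chi_g$ in a graph-norm sense, uniformly in $t$. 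You have thus located precisely where the analytic content of the theorem sits.

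As a proof, however, your proposal is incomplete, and the gap is the step you yourself flag: the claim that $D$ is a core for the generator $\A$ of the restricted semigroup $\eL{t}$ on $\F{\sigma_0}$, i.e.\ that $D$ is dense in $\Dom(\A)$ for the graph norm. Everything else in your outline is sound---the reduction to $g(t)=\theta(t)c$ with $c\in D$, the exact identity $\B\chi_g=g$ for such data, and the mollification/discretisation in $t$ producing $\phi_n\in\DT$ with $\B\phi_n\to g$ once graph-norm approximation of the values of $\chi_g$ is available---so the whole argument stands or falls with this one unproven lemma. And it is genuinely unproven: since $D$ is not invariant under $\eL{t}$, the standard core criterion does not apply; hypotheses {\rm(\ref{heat}a)}--{\rm(\ref{heat}b)}, together with the implicit standing assumption that $\LL^*$ maps $D$ into $\F{\sigma_0}$ (which yields $D\subset\Dom(\A)$), say nothing about graph-norm density, and a dense subspace of a domain need not be a core. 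You should also be aware that the paper does not supply this lemma either: its final line asserts exactly the convergence you are trying to justify. So your text is best read as a corrected, more honest skeleton of the paper's argument whose key lemma is still open, not as a complete proof.
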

    \begin{proof}
      Let $g \in \CTL$ and denote $T=\sup\supp g(s)$. Define $u$ by the formula
      \begin{equation*}
        u(t) = \int_0^t e^{(T-s)\LL^*} g(s)\,ds,\quad\text{for $t\leq T$}.
      \end{equation*}
      Then we have
      \begin{equation*}
        \B u(t) = g(t)\quad\text{and $u(t) \in C(\Tb,\F{\sigma_0})$,}
      \end{equation*}
      because $u$ is a mild solution to an appropriate (reversed in time) inhomogenous initial value problem (see \cite[Sec.~4.2]{MR710486}).
      We know that $C_c^\infty(\Tb\times\X)$ is dense in $C(\Tb,D)$.
      The space $\F{\sigma_0}$ is by definition a Hilbert completion of $D$ (see~\rfb{scalar-product}), and hence contains it as a dense linear subspace.
      Since $\F{\sigma_0}$ is separable, it follows from Lemma~\ref{torunczyk} that $C(\Tb,D)$ is dense in $C(\Tb,\F{\sigma_0})$.

      We may therefore take a sequence
      \begin{align*}
        &u_n\in C_c^\infty(\Tb\times\X)\subset \DT,\\
        &u_n\to u\quad \text{in } C(\Tb,\F{\sigma_0}).
      \end{align*}
      Then $\lim_{n\to\infty}\B u_n = \B u = g$ and thus
      \begin{equation*}
        \B : \DT \to \CTL
      \end{equation*}
      has a dense image.
    \end{proof}
    \begin{lemma}\label{convolutions}
      If $R:[0,\infty)\to[0,1]$ is a non-increasing and integrable function, then there exists $\delta>0$ such that for every $n$ and every $\xi\in[0,\delta)$
      we have
      \begin{equation*}
        \int_0^\infty \cos(\xi x)R^{n+1}(x)\,dx \leq \int_0^\infty \cos(\xi x)R^{n}(x)\,dx
      \end{equation*}
    \end{lemma}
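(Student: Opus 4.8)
The plan is to reduce the claimed inequality to a single positivity statement and then control it by a concentration estimate that is uniform in $n$. Writing $g_n(x) = R^n(x) - R^{n+1}(x) = R^n(x)\big(1-R(x)\big)$, the hypothesis $0\le R\le 1$ gives $g_n\ge 0$, and for $n\ge 1$ the bound $g_n\le R$ shows $g_n$ is integrable. The desired inequality is exactly $F_n(\xi):=\int_0^\infty \cos(\xi x)\,g_n(x)\,dx\ge 0$. Putting $m_n=\int_0^\infty g_n\,dx=F_n(0)\ge 0$, I would rewrite
\[
F_n(\xi)=m_n-\int_0^\infty \big(1-\cos(\xi x)\big)g_n(x)\,dx.
\]
If $R$ is a.e.\ $\{0,1\}$-valued then, being non-increasing and integrable, $R=\mathbbm{1}_{[0,a]}$ for some finite $a$, so $g_n\equiv 0$ and $F_n\equiv 0$; this degenerate case is immediate. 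I therefore assume $\lambda(\{0<R<1\})>0$, which will also force $m_n>0$.

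I would then estimate the correction term with $0\le 1-\cos(\xi x)\le\min\{2,\tfrac12\xi^2x^2\}$, splitting at a threshold $L>0$:
\[
\int_0^\infty\big(1-\cos(\xi x)\big)g_n\,dx\le \frac{\xi^2}{2}\int_0^L x^2 g_n\,dx+2\int_L^\infty g_n\,dx\le \frac{\xi^2 L^2}{2}\,m_n+2\int_L^\infty g_n\,dx.
\]
This gives $F_n(\xi)\ge m_n\big(1-\tfrac12\xi^2L^2\big)-2\int_L^\infty g_n\,dx$, so the whole problem reduces to producing a \emph{single} $L$, independent of $n$, with $\int_L^\infty g_n\,dx\le\tfrac14 m_n$ for every $n$; for such an $L$, the choice $\delta=1/L$ yields $F_n(\xi)\ge\tfrac12 m_n-\tfrac12 m_n=0$ for all $\xi\in[0,\delta)$.

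The heart of the matter — and the step I expect to be the main obstacle — is this uniform concentration estimate, since both $m_n$ and the tail vanish as $n\to\infty$ and their rates must be compared. I would bound them by geometric sequences with different bases. Fixing $0<\rho_1<\rho_2<1$ with $\mu_0:=\lambda(\{\rho_1<R<\rho_2\})>0$ (available by the standing assumption) gives the lower bound $m_n\ge\rho_1^n(1-\rho_2)\mu_0$. For the tail, since $R$ is non-increasing I may choose $L$ so large that $\rho:=R(L)<\rho_1$; then $R^n(x)\le\rho^{\,n-1}R(x)$ for $x\ge L$, whence $\int_L^\infty g_n\,dx\le\rho^{\,n-1}\|R\|_1$. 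Consequently $\int_L^\infty g_n\,dx\,/\,m_n\le C\,(\rho/\rho_1)^n\to 0$, so the bound $\int_L^\infty g_n\le\tfrac14 m_n$ holds for all $n\ge N$ for some $N$.

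Finally I would absorb the finitely many remaining indices. For each $n<N$ the tail $\int_{L'}^\infty g_n\,dx\to 0$ as $L'\to\infty$ while $m_n>0$, so some $L_n$ satisfies $\int_{L_n}^\infty g_n\le\tfrac14 m_n$; replacing $L$ by $L^\ast=\max\{L,L_1,\dots,L_{N-1}\}$ preserves the bound for $n\ge N$ (tails are decreasing in $L$) and secures it for $n<N$. Taking $\delta=1/L^\ast$ then proves the lemma for every $n$ simultaneously.
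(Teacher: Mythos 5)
Your proof is correct, but it takes a genuinely different route from the paper's. The paper argues by induction on $n$ around a single fixed splitting point: it chooses $T$ so that $\frac{1}{2}\int_0^T R(1-R)\,dx \ge \int_T^\infty R(1-R)\,dx$, sets $\delta=\pi/(3T)$ so that $\cos(\xi x)\ge\frac{1}{2}$ on $[0,T]$, and then uses monotonicity of $R$ (namely $R\ge R(T)$ on the head, $R\le R(T)$ on the tail) to propagate the head--tail domination
\begin{equation*}
  \int_0^T \cos(\xi x)R^n(1-R)\,dx \;\ge\; \int_T^\infty |\cos(\xi x)|\,R^n(1-R)\,dx
\end{equation*}
from exponent $n$ to $n+1$; uniformity in $n$ is automatic because $T$ and $\delta$ are fixed once, at the base case $n=1$. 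You instead prove the positivity of $F_n(\xi)=\int_0^\infty\cos(\xi x)R^n(1-R)\,dx$ directly, with no induction: the decomposition $F_n=m_n-\int(1-\cos(\xi x))g_n$, the elementary bound $1-\cos\theta\le\min\{2,\theta^2/2\}$, and a uniform-in-$n$ concentration estimate $\int_L^\infty g_n\le\frac{1}{4}m_n$, which you obtain by comparing geometric rates — the level set $\{\rho_1<R<\rho_2\}$ of positive measure gives $m_n\ge \rho_1^n(1-\rho_2)\mu_0$, while monotone decay gives the tail bound $\rho^{n-1}\|R\|_1$ with $\rho=R(L)<\rho_1$ — and by absorbing the finitely many remaining indices into a larger $L^\ast$. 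Each step of yours checks out (including the degenerate case $R=\mathbbm{1}_{[0,a]}$ a.e., where everything vanishes, and the existence of $\rho_1,\rho_2$, which follows from writing $\{0<R<1\}$ as a countable union of such level sets). The trade-off: the paper's induction is shorter and needs no case analysis, since the fixed $T$ makes the uniformity structural; your argument is longer but more quantitative, making explicit \emph{why} a single $\delta$ works for all $n$ (an explicit rate comparison rather than an inductive invariant). Both proofs use exactly the same two hypotheses, monotonicity and integrability of $R$, in essentially the same roles.
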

    \begin{proof}
      Since the function $t\mapsto \int_0^t R(x)(1-R(x))\,dx$ is positive and non-decreasing there exists such $T$ that
      \begin{equation*}
        0\leq\frac{1}{2}\int_0^T R(x)(1-R(x))\,dx - \int_T^\infty R(x)(1-R(x))\,dx.
      \end{equation*}
      Hence for $\delta=\frac{\pi}{3T}$ and every $\xi\in[0,\delta)$
      \begin{equation*}
        0\leq\int_0^T \cos(\xi x)R(x)(1-R(x))\,dx - \int_T^\infty |\cos(\xi x)| R(x)(1-R(x)).
      \end{equation*}
      Now suppose that for some $n$ and and every $\xi \in [0,\delta)$ we have
      \begin{equation*}
        0\leq\int_0^T \cos(\xi x)R(x)^n(1-R(x))\,dx - \int_T^\infty |\cos(\xi x)| R(x)^n(1-R(x)).
      \end{equation*}
      Then since $R$ is non-increasing
      \begin{align*}
        &\int_0^T \cos(\xi x)R^{n+1}(1-R)\,dx \geq R(T)\int_0^T \cos(\xi x)R^n(1-R)\,dx\\
        &\geq R(T)\int_T^\infty |\cos(\xi x)| R^n (1-R)\,dx \geq \int_T^\infty |\cos(\xi x)| R^{n+1} (1-R)\,dx.
      \end{align*}
      Thus by the induction principle we prove that for every $n$ and every $\xi \in [0,\delta)$
      \begin{align*}
        0 &\leq \int_0^T \cos(\xi x)R(x)^{n}(1-R(x))\,dx - \int_T^\infty |\cos(\xi x)| R(x)^{n} (1-R(x))\\
        &\leq\int_0^\infty \cos(\xi x)R(x)^{n}(1-R(x))\,dx
      \end{align*}
      and finally
      \begin{equation*}
        \int_0^\infty \cos(\xi x)R(x)^{n+1}\,dx \leq \int_0^\infty \cos(\xi x)R(x)^{n}\,dx.
        \qedhere
      \end{equation*}
    \end{proof}
  \end{section}
  \section*{Acknowledgement}
    The author wishes to thank Grzegorz Karch for dedicated time and his invaluable help in preparing this paper.
  \bibliographystyle{amsalpha}
  \bibliography{evolution}
\end{document}